\newtheorem{proposition}{Proposition}[section]
\newtheorem{lemma}[proposition]{Lemma}
\newtheorem{theorem}[proposition]{Theorem}
\theoremstyle{definition}
\newtheorem{definition}[proposition]{Definition}
\newtheorem{remark}[proposition]{Remark}
\numberwithin{equation}{section}
\newcommand{\blue}[1]{\textcolor{blue}{#1}}
\newcommand{\T}{\mathbb{T}}
\newcommand{\Dt}{\Delta t}
\renewcommand\subsubsection{\@startsection{subsubsection}{3}%
\normalparindent{.5\linespacing\@plus.7\linespacing}{-.5em}
{\normalfont\bfseries}}
\def\@tocline#1#2#3#4#5#6#7{\relax
  \ifnum #1>\c@tocdepth % then omit
  \else
    \par \addpenalty\@secpenalty\addvspace{#2}%
    \begingroup \hyphenpenalty\@M
    \@ifempty{#4}{%
      \@tempdima\csname r@tocindent\number#1\endcsname\relax
    }{%
      \@tempdima#4\relax
    }%
    \parindent\z@ \leftskip#3\relax \advance\leftskip\@tempdima\relax
    \rightskip\@pnumwidth plus4em \parfillskip-\@pnumwidth
    #5\leavevmode\hskip-\@tempdima
      \ifcase #1
       \or\or \hskip 1em \or \hskip 2em \else \hskip 3em \fi%
      #6\nobreak\relax
    \dotfill\hbox to\@pnumwidth{\@tocpagenum{#7}}\par
    \nobreak
    \endgroup
  \fi}
\newcommand\eps{\varepsilon}
\newcommand\e{{\rm e}}
\newcommand\dd{{\rm d}}
\newcommand\Ker{{\rm Ker}}
\newcommand\Ran{{\rm Ran}}
\newcommand\Id{{\rm Id}}
\def\Re{{\rm Re}}
\newcommand{\dist}{{\rm dist}}
\newcommand{\RR}{\mathbb{R}}
\newcommand{\NN}{\mathbb{N}}
\newcommand{\CC}{\mathbb{C}}
\newcommand{\TT}{\mathbb{T}}
\newcommand{\jj}{\boldsymbol{j}}
\newcommand{\cL}{\mathcal{L}}
\newcommand{\cS}{\mathcal{S}}
\newcommand{\initial}{\text{in}}
\newcommand{\xx}{\mathbf{x}}
\author[M. Sorella]{Massimo Sorella}
\address{Department of Mathematics, Imperial College London}
\email{m.sorella@imperial.ac.uk}
\author[D. Villringer]{David Villringer}
\email{d.villringer22@imperial.ac.uk}
\title[A limsup fast dynamo on $\TT^3$]{A limsup fast dynamo on $\TT^3$} 
\begin{document}

\keywords{Fast dynamo, alpha-effect, exponential growth, spectral perturbation, sectorial operators}

\begin{abstract}
We construct a time-dependent, incompressible, and uniformly-in-time Lipschitz continuous velocity field on $\TT^3$ that produces exponential growth of the magnetic energy along a subsequence of times, for every positive value of the magnetic diffusivity. Because this growth is not uniform in time but occurs only along a diverging sequence of times, we refer to the resulting mechanism as a limsup fast dynamo.
Our construction is based on suitably rescaled Arnold–Beltrami–Childress (ABC) flows, each supported on long time intervals. The analysis employs perturbation theory to establish continuity of the exponential growth rate with respect to both the initial data and the diffusivity parameter.
This proves the weak form of the fast dynamo conjecture formulated by Childress and Gilbert in  \cite{CG95} on $\TT^3$, but the considerably more challenging version proposed by Arnold \cite{Arnold04} on $\TT^3$ remains an open problem.
\end{abstract}

\maketitle

\tableofcontents

\section{Introduction}

The kinematic dynamo problem seeks to explain how magnetic fields in astrophysical and geophysical systems, such as stars and planets, can be sustained and amplified.
In these systems, magnetic fields are stretched, twisted, and folded by fluid motion, leading to their growth despite resistive effects. 
In 1919, Larmor~\cite{larmor1919possible} was the first to propose that the magnetic fields of the Sun and Earth are generated and sustained by the motion of electrically conducting fluids within these celestial bodies.  
A concrete observable example is given by sunspots, temporary dark regions on the Sun’s surface, see Figure \ref{fig:moffatt}, that are linked to a change of magnetic field intensity. 
\begin{figure}[h]
    \centering
    \includegraphics[width=6cm]{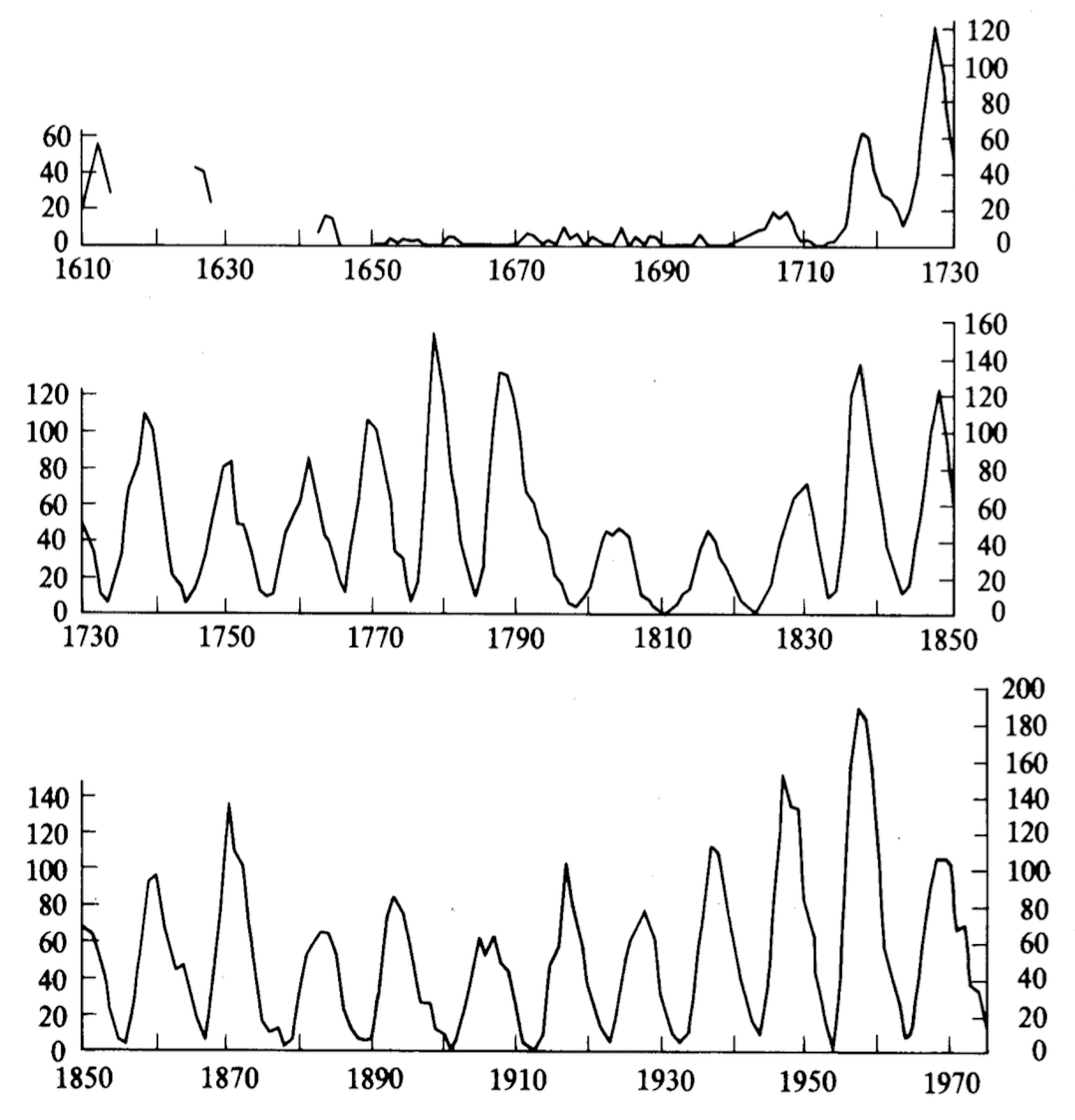}
    \caption{Annual mean sunspot number, 1610-1975. Figure from \cite{Moffatt78}.} \label{fig:moffatt}
\end{figure}

The process by which magnetic fields are produced in astrophysical objects, or more generally in conducting fluids, is the central subject of {{non-linear dynamo theory}}. A key subfield, known as {{kinematic dynamo theory}}, seeks to understand which classes of fluid flows can induce growth of a magnetic field when {{magnetic diffusivity}} is small.
Throughout the 20\textsuperscript{th} century, the problem of magnetic field growth has been extensively investigated in the context of the {{magnetohydrodynamics}} (MHD) equations, primarily from an applied and physical perspective, see~\cites{CG95,AK98,brandenburg2005astrophysical,Moffatt78,rudiger2006magnetic}. However, relatively few works have addressed this phenomenon rigorously from a mathematical standpoint, see~\cites{FV91,Gerardvaret05,GR07,Vishik86,NF_DV2025nonlinear}.

We study the kinematic dynamo equation (or passive vector equation) with a given velocity field  $u :  (0,\infty)\times \TT^3 \to \RR^3$
\begin{align} \label{passive-vector}
    \begin{cases}\tag{KDE}
    \partial_t B^\eps + u \cdot \nabla B^\eps - B^\eps \cdot \nabla u = \varepsilon \Delta B^\eps 
    \\
    \nabla \cdot B =0 \,.
    \end{cases}\qquad (t,x)\in (0,\infty)\times \TT^3 \,,
\end{align}
where  $\varepsilon >0$ is the magnetic diffusivity.
A central challenge in this field is to understand the mechanisms underlying fast dynamos—flows capable of sustaining magnetic field growth at rates that remain independent of magnetic diffusivity. Mathematically, this corresponds to an $\varepsilon$-independent exponential growth in time of the $L^2$ norm of $B^\varepsilon$, also referred to as the total magnetic energy. Interestingly enough, several mathematical definitions have been proposed in the literature (see, for instance, \cites{CG95, AK98, Arnold04, CZSV25, R25,NF_DV2025spectral}) for capturing this phenomenon, which, as will be shown in this paper, are \emph{not} equivalent when the velocity field $u$ is time-dependent. In particular, these definitions differ in how the limits $\varepsilon \to 0$ and $t \to \infty$ are taken. In what follows, we present a precise mathematical definition that captures the essential difficulty of the problem.
Let $u$ be  a bounded, Lipschitz continuous, divergence-free velocity field, $\eps \in (0,1)$ and  $B_{\text{in}}^\eps \in L^2 (\TT^3)$ the initial datum. Then, we define the \emph{dynamo growth rate} ${\gamma_\eps}= {\gamma_\eps} (B_{\text{in}}^\eps, \eps)$ of \eqref{passive-vector} as  
\begin{align} \label{d:gamma}  
{\gamma_\eps}  := \liminf_{t\to\infty}\frac1t \log \| B^\eps (t) \|_{L^2(\TT^3)}>0 \,. 
\end{align}
Conversely, we define the \emph{$\limsup$ dynamo growth rate} $ \overline{\gamma_\eps} = \overline{\gamma_\eps} (B_{\text{in}}^\eps, \eps)$ by 
\begin{equation} \label{d:gamma-over}
\overline{\gamma_\eps}:=\limsup_{t \to \infty} \frac{1}{t} \log\|B^\eps(t)\|_{L^2(\mathbb{T}^3)}>0,
\end{equation}
We say that $u$ is a fast dynamo (correspondingly $\limsup$ fast dynamo) if there exists  a sequence $\{ B_{\initial}^\eps \}_{\eps >0} \subset L^2$ such that the following holds true.

\[
\begin{aligned}
&\textbf{Fast dynamo:} \quad 
&\liminf_{\varepsilon \to 0} {\gamma_\varepsilon} > 0 \\[6pt]
&\textbf{lim\,sup Fast dynamo:} \quad
&\liminf_{\varepsilon \to 0} \overline{\gamma_\varepsilon} > 0
\end{aligned}
\]

% \begin{center}
% \[
% \liminf_{\varepsilon \to 0} {\gamma_\varepsilon} > 0 
% \qquad\Longrightarrow\qquad 
% \textbf{Fast dynamo}
% \]

% \[
% \liminf_{\varepsilon \to 0} \overline{\gamma_\varepsilon} > 0 
% \qquad\Longrightarrow\qquad 
% \textbf{lim\,sup Fast dynamo}
% \]
% \end{center}
% \begin{table}[h!]
% \centering
% \renewcommand{\arraystretch}{1.6}
% \begin{tabular}{|>{\centering\arraybackslash}m{6cm}|>{\centering\arraybackslash}m{4cm}|}
% \hline
% $\displaystyle \liminf_{\varepsilon \to 0} {\gamma_\eps} > 0$ & \textbf{{Fast dynamo}} \\ \hline
% $\displaystyle \liminf_{\varepsilon \to 0} \overline{\gamma_\eps} > 0$ & \textbf{lim\,sup {Fast dynamo}} \\ \hline
% \end{tabular}
% \end{table}

The definition of a fast dynamo adopted here coincides with that given by Arnold in \cites{Arnold04, AK98}, whilst the definition of a $\limsup$ fast dynamo agrees with the fast dynamo definition introduced by Childress and Gilbert \cite[see (4.1.2) and (4.1.4)]{CG95}.
We observe that every fast dynamo is necessarily a $\limsup$ fast dynamo, since by definition ${\gamma_\varepsilon} \leq \overline{\gamma_\varepsilon}$; however, the converse does not always hold, unless the velocity field is autonomous or time-periodic, see Lemma \ref{lemma:equivalent}. In this work, we resolve the problem of finding a $\limsup$ fast dynamo, which corresponds to the formulation of the fast dynamo problem given in \cite{CG95}; however, we believe that establishing the existence of a genuine fast dynamo on $\TT^3$ is substantially more difficult.
 
\begin{theorem} \label{thm:main}
    There exists a divergence free velocity field $u \in L^\infty ((0,\infty) ; W^{1, \infty} (\TT^3))$ that is a  $\limsup$ fast dynamo on $\TT^3$.
\end{theorem}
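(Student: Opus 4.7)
The plan is to realise $u$ as an infinite concatenation, on rapidly lengthening time intervals, of spatially rescaled copies of a single autonomous ABC flow, chosen so that for every sufficiently small $\eps>0$ an infinite subsequence of these intervals corresponds, in rescaled coordinates, to an autonomous dynamo at a common base diffusivity. First, fix a divergence-free ABC flow $u_\star$ on $\TT^3$ and a diffusivity $\eps_\star>0$ at which the generator
\[
\cL_{\eps_\star} B := -u_\star\cdot\nabla B + B\cdot\nabla u_\star + \eps_\star\Delta B
\]
possesses an isolated eigenvalue $\mu_\star$ with $\Re\mu_\star>0$ and a divergence-free eigenfunction $\phi_\star$; producing this ``base dynamo'' is the $\alpha$-effect input on ABC flows, and uses sectoriality of $\cL_{\eps_\star}$ in an essential way. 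Kato-type perturbation theory for sectorial operators then yields $\eta>0$ such that for $|\tilde\eps-\eps_\star|<\eta$ the operator $\cL_{\tilde\eps}$ still carries an isolated unstable eigenvalue $\mu(\tilde\eps)$ with $\Re\mu(\tilde\eps)\geq\Re\mu_\star/2$ and a finite-rank eigenprojector $P_{\tilde\eps}$ depending continuously on $\tilde\eps$.

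Next exploit spatial rescaling: for $k\in\NN$ set $u_k(x):=k^{-1}u_\star(kx)$. This is divergence-free on $\TT^3$, satisfies $\|u_k\|_{W^{1,\infty}}=\|u_\star\|_{W^{1,\infty}}$ independently of $k$, and the change of variables $y=kx$ turns (KDE) driven by $u_k$ at diffusivity $\eps$ into the same equation driven by $u_\star$ at effective diffusivity $\eps k^2$. Hence whenever $\eps k^2\in(\eps_\star-\eta,\eps_\star+\eta)$, the autonomous flow $u_k$ is a dynamo on $\TT^3$ at diffusivity $\eps$ with unstable eigenfunction $\phi_{k,\eps}(x):=\phi(kx)$, where $\phi$ is the eigenfunction from the perturbation step. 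Now pick a surjection $n\mapsto k_n$ of $\NN$ onto itself in which every integer recurs infinitely often, partition $(0,\infty)=\bigcup_n[T_n,T_{n+1})$ into intervals of length $L_n=T_{n+1}-T_n$ growing so fast that $T_n/L_n\to 0$ (e.g.\ $L_n=2^{2^n}$), and set $u(t,x):=u_{k_n}(x)$ on $[T_n,T_{n+1})$, with short smooth time-transitions so that $u\in L^\infty_t W^{1,\infty}_x(\TT^3)$ uniformly.

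For each small $\eps$ define the resonant scale $k(\eps):=\lfloor\sqrt{\eps_\star/\eps}\rfloor$, so $\eps k(\eps)^2\in(\eps_\star-\eta,\eps_\star+\eta)$ eventually, and let $n_1<n_2<\cdots$ enumerate the (infinite) indices with $k_{n_j}=k(\eps)$. Choose $B_{\ini}^\eps$ so that the evolved state $B^\eps(T_{n_1})$ has non-negligible projection $P_{\eps k(\eps)^2}B^\eps(T_{n_1})$; this can be achieved either by back-propagating $\phi_{k(\eps),\eps}$ through the flow from time $T_{n_1}$ to time $0$, or by noting that the "bad" set of initial data is contained in a closed proper subspace of $L^2$. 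On every resonant interval $[T_{n_j},T_{n_j+1})$ the autonomous semigroup then amplifies the unstable component by at least $\exp(\Re\mu_\star L_{n_j}/2)$, while on any interval the Grönwall bound gives $\|B^\eps(T_{n+1})\|_{L^2}\leq\exp(CL_n)\|B^\eps(T_n)\|_{L^2}$ with $C=\|u\|_{L^\infty_tW^{1,\infty}_x}$. Since $T_n/L_n\to 0$, the amplification on each resonant interval dominates the cumulative losses accumulated before it, so along the subsequence $t_j:=T_{n_j+1}$
\[
\frac{1}{t_j}\log\|B^\eps(t_j)\|_{L^2}\geq\frac{\Re\mu_\star}{2}-o(1),
\]
and $\overline{\gamma_\eps}\geq\Re\mu_\star/2>0$ follows for every sufficiently small $\eps$.

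The main technical obstacle has two facets. The first is establishing the autonomous base dynamo on $\TT^3$ with a quantitative spectral gap; because $\cL_{\eps_\star}$ is non-self-adjoint, this requires a careful $\alpha$-effect analysis for ABC flows together with the full strength of the sectorial perturbation machinery. More subtle, however, is guaranteeing a workable lower bound on $|P_{\eps k(\eps)^2}B^\eps(T_{n_j})|$ for the resonant indices, since the non-resonant intervals in between can in principle rotate $B^\eps$ into the stable subspace of $u_{k(\eps)}$; controlling this uniformly in $\eps$ is precisely the content of the continuity of the exponential growth rate with respect to both initial data and diffusivity that the abstract advertises. Pushing this circle of ideas from $\limsup$ to full $\liminf$ growth would amount to proving Arnold's version of the conjecture, which the paper explicitly leaves open.
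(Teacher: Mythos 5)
Your high-level skeleton matches the paper closely---rescaled ABC flows on $\TT^3$, an $\alpha$-effect base dynamo at a fixed diffusivity, Kato-type sectorial perturbation to get persistence in the diffusivity parameter and initial datum, concatenation over rapidly lengthening intervals in which every scale recurs infinitely often. Up to that point you are on target. But there is a genuine gap at the most delicate step, which is the one that actually makes the construction close.

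The gap is how the unstable spectral component is kept alive through the long non-resonant intervals. You correctly observe that on the first resonant interval one can arrange $P_{\eps k(\eps)^2} B^\eps(T_{n_1}) \neq 0$, for instance because the ``bad'' initial data form a proper closed subspace. (Your other suggestion, back-propagating $\phi_{k(\eps),\eps}$ to time $0$, does not work: for $\eps>0$ the evolution is parabolic and the backward problem is ill-posed.) However, the argument then silently assumes that at \emph{every} later resonant time $T_{n_j}$ the projection $P_{\eps k(\eps)^2} B^\eps(T_{n_j})$ is again bounded below relative to $\|B^\eps(T_{n_j})\|$. The non-resonant intervals are very long, each piece of the flow is a different rescaled ABC flow with its own spectral decomposition, and nothing in the Gr\"onwall bound or in the continuity-in-$(\kappa, x_0)$ statement controls the \emph{direction} of $B^\eps$ at the next resonant time: the solution can decay onto (or rotate into) the stable subspace of $u_{k(\eps)}$, in which case the subsequent resonant interval produces no growth at all. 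You flag this worry but attribute its resolution to ``continuity of the growth rate in initial data and diffusivity''; that is a continuity statement around a \emph{particular} well-chosen datum, and it does not by itself say anything about where the long non-resonant evolution parks $B^\eps$.

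The paper's resolution is a structural trick that your proposal lacks. Two observations: (i) the spatial average $\langle B^\eps\rangle$ is conserved under \eqref{passive-vector} for any divergence-free $u$, and (ii) with initial datum $B_{\initial}=(0,0,1)$ this conserved mean is $(0,0,1)$, which is simultaneously a steady state of all the rescaled ABC flows. The paper therefore splits each interval into three phases: a long phase with $u\equiv 0$, during which diffusion sends $B^\eps$ exponentially fast to its mean $(0,0,1)$ (this ``resets'' the state, erasing whatever direction the preceding non-resonant evolution left it in); a short phase (time $1$) with a hand-built \emph{generation} field $U_g^{(n)}$ defined in \eqref{d:velocity-generation}, whose effect starting from $(0,0,1)$ is to produce, up to a harmless scalar $\alpha\in[1,2]$ and an $\eta_n$-small error, exactly the modal datum $B_{\initial}^{(n)}=\e^{inz}(-i,1,0)+(0,0,1)$ that Proposition \ref{prop:main} requires; and only then the ABC phase $U^{(n)}$, whose growth Proposition~\ref{prop:main} now guarantees because the input datum is in the required $\eta_n$-ball. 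Without the reset there is no control over the direction of $B^\eps$ entering the resonant phase; without the regeneration step the reset alone is useless, since the pure mean $(0,0,1)$ is stationary for $U^{(n)}$ and projects trivially onto the non-constant unstable eigenfunction. Both steps are essential, and neither appears in your proposal. The rest of your scheme---the ``every $n$ recurs'' bookkeeping, the rapidly growing $L_n$, the ratio $T_n/L_n\to 0$---is morally identical to the paper's choice of $t_k$ in \eqref{eq:tk}, and your discussion of why this gives only a $\limsup$ result is accurate.
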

As a corollary, since our velocity field $u$ satisfies $\gamma_\eps =0$ for all $\eps >0$, we also show that ${\gamma_\eps}<\overline{\gamma_\eps}$ is possible for time-dependent velocity fields.
%Indeed, the temporal profile of the magnetic energy of solutions in the main theorem resembles the one shown in Figure \ref{fig:moffatt}.
\subsection{Literature and main challenges of fast dynamo action}

Chaotic velocity fields can be constructed using techniques from random dynamical systems \cites{BBPS22,BaxendaleRozovskii93,CZNF24}. 
In particular, the authors \cites{BaxendaleRozovskii93,CZNF24} prove that chaotic random dynamical systems can produce a universal \emph{ideal dynamo}---that is, a velocity field for which every nontrivial \(L^2\) initial datum yields an {exponentially growing} solution of \eqref{passive-vector} in the inviscid case \(\varepsilon = 0\). 
Such exponential growth is a \emph{necessary} condition for fast dynamo action under suitable assumptions as shown by Vishik \cite{Vishik89}, since a fast dynamo requires positivity of the ideal Lagrangian top Lyapunov exponent of \(u\). In fact, fast dynamo action requires the stronger condition of the velocity field having non-zero topological entropy, see \cite{Klapper_Young_1995}.
The central difficulty of the fast dynamo problem is that treating exponential growth for \(\varepsilon = 0\) perturbatively in \(\varepsilon\) is highly nontrivial.
In all known examples, ideal exponential growth is accompanied by the generation of increasingly fine spatial scales that are difficult to characterize along the evolution. 
These small scales can destroy large-scale coherence, potentially suppressing the net magnetic energy growth responsible for the dynamo effect. 
Understanding this interplay between the stretching term \(B^\varepsilon \!\cdot\! \nabla u\), which promotes growth, and the advective term \(u \!\cdot\! \nabla B^\varepsilon\), which transfers energy to smaller scales, lies at the heart of the fast dynamo problem.
From a spectral theory viewpoint, the main difficulty arises from the structure of the vector-transport operator. 
When \(u\) is autonomous, this operator has no discrete spectrum in $L^2$ and $C^0$ (see \cite{Chicone_Latushkin_Montgomery-Smith_1995}), so existing techniques from the singular perturbation theory for treating \(\varepsilon \Delta\) as a perturbation (such as those in \cites{K76,Albritton_Brué_Colombo_2022}) do not seem to be applicable.
% The Laplacian acts as a singular perturbation of the transport operator, rendering standard spectral theory inapplicable. 
% Moreover, results on spectral continuity under singular perturbations---such as those in \cite{Albritton_Brué_Colombo_2022}---require the unperturbed operator to possess a nonempty discrete spectrum, which it does not: as shown in \cite{Chicone_Latushkin_Montgomery-Smith_1995}, its spectrum is confined to a vertical strip. 
In fact, it is known that exponential growth in the ideal case \(\varepsilon = 0\) does not in general imply fast dynamo action, as shown by any 2D velocity field with a hyperbolic point. A more subtle question, known as the \emph{flux conjecture} \cite{CG95}, asks whether exponential growth in the ideal case when averaged against a smooth test function (i.e.\ $|\langle B(t),\phi\rangle| \gtrsim \e^{\lambda t}$ for $\langle \cdot, \cdot \rangle$ the $L^2$ inner product, and $\phi$ some smooth test function) implies fast dynamo action. Whilst the validity of this conjecture is unclear in the case of fully time-dependent velocity fields $u$, in the case where $u$ is autonomous or time-periodic, it is supported by strong numerical evidence \cite{CG95}---however, there are no known rigorous proofs.
Kinematic dynamo theory is further constrained by structural obstructions to simplification.
Classical anti-dynamo theorems exclude fast dynamos under particular symmetries. 
Notably, Zeldovich’s theorem \cites{zeldovich1980magnetic,Zeldovich_1992} rules out fast dynamo action for flows with zero vertical component, while Cowling’s theorem \cite{Cowling33} forbids axisymmetric dynamos. 
These results underscore that a genuine fast dynamo must go beyond purely two-dimensional dynamics \cite{AK98}.
To date, the only rigorous proof of kinematic dynamo action on compact subdomains of $\mathbb{R}^3$ was provided in the recent work \cite{NF_DV2025spectral}. However, the growth rate ${\gamma_\eps}$ is of order $\eps^{1/3}$ in the diffusivity, a phenomenon known as \emph{slow dynamo action}.
Thus far, all rigorously established fast dynamo examples are defined on \(\RR^3\): see \cite{ZRMS84} for an unbounded velocity field, \cite{Gilbert88} for a discontinuous one, and the recent work \cite{CZSV25} for bounded, Lipschitz-continuous fields.
In \cite{CZSV25}, the authors show that the alpha-effect can produce exponential magnetic-field growth and, in particular, that suitably rescaled ABC flows (with \(\varepsilon\)-dependent scaling) yield exponential growth on \(\TT^3\). The authors use this result to construct a fast dynamo on $\RR^3$. 
The analysis of the alpha-effect for ABC flows plays a key role in the proof of Theorem~\ref{thm:main}. 
We also draw inspiration from the recent result of Rowan \cite{R25}, who constructs a smooth velocity field on $\TT^3$ that produces exponential magnetic growth along a diverging sequence of times and a vanishing subsequence of diffusivities. The improvement in Theorem~\ref{thm:main}, compared to the result in \cite{R25}, is that the growth we obtain is uniform as $\eps \to 0$ and does not occur only along a subsequence.
\vspace{-0.12cm}
\subsection{Strategy of the proof}
We consider, as a building block,  an ABC flow with $a=0$ and $b=c=1$ and with small amplitude $\delta >0$, i.e.
\[
U(\xx) = \delta \begin{pmatrix}
    \cos(y) \\
    \sin(x) \\
    \sin(y) + \cos(x)
\end{pmatrix}.
\]
We study spectral property of the kinematic dynamo operator associated to this velocity field in Section \ref{sec:ABC}, and we prove that the alpha-effect gives rise to solutions in the modal form 
$$B(t, \xx) = \exp (i \jj \cdot \xx + p t ) H(\xx) \,, \quad \jj \in \RR^3 \,, |\jj| \ll 1$$
that are exponentially growing in time with $\eps=1$. We also prove that any initial datum such that $P_\lambda  B \neq 0$, where $P $ is the Riesz projector onto the eigenvalue $\lambda $ with $\Re (\lambda)>0$, also gives rise to an exponentially growing solution. In particular, we prove that for $v= (-i, 1, 0)$ we have an exponentially growing solution with velocity field $U$ and initial datum  
$$ B_{\initial} = \exp (i \jj \cdot \xx) v \,.$$
This is inspired by the work \cite{CZSV25}. We further use perturbation theory and sectorial operator theory, see Section \ref{sec:sectorial} and Lemma \ref{lemma:sectorial2}, to deduce that solutions $B^\eps$ to \eqref{passive-vector} with $|\eps -1| \ll 1$ and initial datum $B (0) \in L^2$ such that 
$\| B_{\initial} - B (0) \|_{L^2} \ll 1$ satisfy
$$ \| B^\eps (t) \|_{L^2}  \gtrsim \e^{\frac {\Re (p )}{2} t} \| B_{\initial} \|_{L^2} \,.$$
%  We  now fix a number $N_0 \in \NN$ sufficiently large so that 
% $$ U^{(1)}(\xx) = \frac{1}{N_0} U(N_0 \xx)\,, \qquad B^{(1)}(t, \xx) = B(t, N_0 \xx) \,, $$
% are periodic functions\footnote{Notice that $B$ is not periodic because $|\jj| \ll 1$.} defined on $\TT^3$ and $B^{(1)}$. 
Furthermore, we deduce this property also for the rescaling 
$$ B (t, \xx) \to B^{(n)} :=B(t, n \xx)\,, \quad U( \xx) \to U^{(n)}:= \frac{1}{n}U( n \xx) \,, \quad \eps \to \eps_n :=\frac{\eps}{n^2} \qquad \forall n \in \NN \,,$$
see Proposition \ref{prop:main} for a precise statement\footnote{Technically, we also rescale $U$ and $B$ with a fixed number $N_0 \in \NN$ sufficiently large so that they becomes periodic functions. Indeed, $B_{\initial}$ is not periodic because $|\jj|\ll 1$.}. Notice that the Lipschitz norm of $U$ with this rescaling is non-increasing.
We  construct the time-dependent velocity field $u$ for which growth of solutions will be guaranteed for $\eps \in (\eps_n)_n$, which is inspired by \cite{R25}. 
We fix a sequence that visits each $n \in \NN$ infinitely many times. More precisely, we fix 
a sequence $(a_k)_{k \ge 1}$ of natural numbers such that
\[
\forall n \in \mathbb{N}, \quad \#\{ k \in \mathbb{N} : a_k = n \} = \infty\,.
\] 
We denote by $(t_k)_k$ a suitable sequence of times to be chosen.
We crucially observe that the vector
$(0,0,1)$ is a stationary solutions to \eqref{passive-vector} with such velocity fields, which will be our initial condition in Theorem \ref{thm:main}. We notice that the average $(0,0,1)$ is preserved along the evolution of \eqref{passive-vector}. 
We then use the velocity field \( U_{g}^{(n)} \), defined in \eqref{d:velocity-generation}, which generates
\[
B^{(n)}_{\mathrm{in}}(\xx) = B_{\mathrm{in}}(n \xx)
\]
within time \(1\), starting from the spatially averaged initial state \((0,0,1)\).
With this newly generated initial condition, the solution to \eqref{passive-vector} exhibits exponential growth under the action of \( U^{(n)} \).
% Then, we use a velocity field $U_{g}^{(n)}$ defined in \eqref{d:velocity-generation} which generates 
% $$B^{(n)}_{\initial} (\xx) = B_{\initial} (n \xx) $$
% in time $1$, starting from the averaged initial datum $(0,0,1)$, such that the solution to \eqref{passive-vector} with such new initial datum  grows exponentially with $U^{(n)}$.
More precisely, if $k \in \NN$ is such that $a_{k+1} =n $, then we define the velocity field $u$ on $[t_{k}, t_{k+1}]$ as follows

\medskip

\medskip 

\begin{tikzpicture}[>=Stealth, x=1.18cm, y=1cm] % smaller figure
\centering

  % --- Geometric parameters for the drawing ---
  \def\tkmone{0}
  \def\Dtunits{12} % total length on the picture (smaller than before)
  \pgfmathsetmacro{\tk}{\tkmone+\Dtunits}
  \pgfmathsetmacro{\tmid}{\tkmone+\Dtunits/2}
  \pgfmathsetmacro{\tplus}{\tmid+1.4}  % middle sub-interval length 1 (true to definition)

  % --- Axis ---
  \draw[->] (\tkmone-0.6,0) -- (\tk+0.6,0) node[below right] {$t$};

  % --- Bars for the three phases ---
  \fill[gray!20,  draw=black] (\tkmone,0.6) rectangle (\tmid,1.4);
  \node at ({(\tkmone+\tmid)/2},1.0) {$u \equiv 0$};

  \fill[blue!15,  draw=black] (\tmid,0.6)   rectangle (\tplus,1.4);
  \node at ({(\tmid+\tplus)/2},1.0) {$u\equiv U_{g}^{(n)}$};

  \fill[green!20, draw=black] (\tplus,0.6)  rectangle (\tk,1.4);
  \node at ({(\tplus+\tk)/2},1.0) {$u \equiv U^{(n)}$};

  % --- Tick marks ---
  \draw (\tkmone,0.12) -- (\tkmone,-0.12);
  \draw (\tmid,0.12)   -- (\tmid,-0.12);
  \draw (\tplus,0.12)  -- (\tplus,-0.12);
  \draw (\tk,0.12)     -- (\tk,-0.12);

  % --- Labels: all below, middle ones small + slightly shifted ---
  \node[below=4pt] at (\tkmone,0) {\scriptsize $t_{k}$};

  \node[below=6pt, xshift=-4pt] at (\tmid,0)
    {\scriptsize $t_{k}+\tfrac{\Dt}{2}$};

  \node[below=6pt, xshift=+4pt] at (\tplus,0)
    {\scriptsize $t_{k}+\tfrac{\Dt}{2}+1$};

  \node[below=4pt] at (\tk,0) {\scriptsize $t_{k+1}$};

\end{tikzpicture}

Assuming again that $a_{k+1} = n$, then if we choose the sequence $(t_k)_k$ as in  \eqref{eq:tk}, we can prove that the evolution of the magnetic energy of $B^{\eps_n}$, solution to \eqref{passive-vector}, satisfies the following

\medskip
\medskip

   \begin{tikzpicture}[>=Stealth, x=1.2cm, y=1.2cm]

  % --- Time geometry ---
  \def\tk{0}              % position of t_k
  \def\Dtunits{10}        % total length of [t_k, t_{k+1}] in drawing units
  \pgfmathsetmacro{\tkone}{\tk+\Dtunits}      % t_{k+1}
  \pgfmathsetmacro{\tmid}{\tk+\Dtunits/2}     % t_k + Dt/2
  \pgfmathsetmacro{\tflatend}{\tmid+1}        % t_k + Dt/2 + 1

  % --- Axes ---
  \draw[->] (\tk-0.6,0) -- (\tkone+0.6,0) node[below right] {$t$};
  \draw[->] (\tk-0.6,0) -- (\tk-0.6,2.5) node[above] {$\|B^{\eps_n}(t)\|_{L^2}$};

  % --- Vertical dashed lines at transition times ---
  \draw[densely dashed] (\tmid,0) -- (\tmid,2.4);
  \draw[densely dashed] (\tflatend,0) -- (\tflatend,2.4);

  % --- Tick marks ---
  \draw (\tk,0.12)       -- (\tk,-0.12);
  \draw (\tmid,0.12)     -- (\tmid,-0.12);
  \draw (\tflatend,0.12) -- (\tflatend,-0.12);
  \draw (\tkone,0.12)    -- (\tkone,-0.12);

  % --- Time labels (all below) ---
  \node[below=4pt] at (\tk,0)        {\scriptsize $t_k$};
  \node[below=6pt, xshift=-2pt] at (\tmid,0)
    {\scriptsize $t_k+\tfrac{\Dt}{2}$};
  \node[below=6pt, xshift=+2pt] at (\tflatend,0)
    {\scriptsize $t_k+\tfrac{\Dt}{2}+1$};
  \node[below=4pt] at (\tkone,0)     {\scriptsize $t_{k+1}$};

  % --- Curve: exponential-like decay (parabolic shape) on (t_k, t_k + Dt/2) ---
  % y(s) = 0.4 + (2.2 - 0.4) (1 - s)^2,  s in [0,1]
  \draw[thick]
    plot[domain=0:1, samples=60]
      ({\tk + (\tmid-\tk)*\x},
       {0.4 + (2.2-0.4)*(1-\x)^2});

  % --- Curve: almost constant on (t_k + Dt/2, t_k + Dt/2 + 1) ---
  \draw[thick]
    (\tmid,0.4) -- (\tflatend,0.45);

  % --- Curve: exponential-like growth (parabolic shape) on (t_k + Dt/2 + 1, t_{k+1}) ---
  % y(s) = 0.45 + (2.3 - 0.45) s^2,  s in [0,1]
  \draw[thick]
    plot[domain=0:1, samples=60]
      ({\tflatend + (\tkone-\tflatend)*\x},
       {0.45 + (2.3-0.45)*\x^2});

  % --- Region labels (optional) ---
  \node at ({(\tk+\tmid)/2},2.1)         {\scriptsize decay};
  \node at ({(\tmid+\tflatend)/2},0.9)   {\scriptsize almost constant};
  \node at ({(\tflatend+\tkone)/2},2.1)  {\scriptsize growth};
  % \caption{Energy of \(B^{\varepsilon_n}\) on the time interval \([t_k, t_{k+1}]\) for the choice \(a_k = n\).}

\end{tikzpicture}

%{\color{red} in the middle part above the energy almost becomes double but compared to the exponential growth is negligible.}

In particular, we can prove that 
$$ \| B^{\eps_n} (t_{k+1}) \|_{L^2}  \gtrsim \e^{\frac {\Re (p)}{3} t_{k+1}} \| B_{\initial} \|_{L^2} \,.$$
Finally, with  Proposition \ref{prop:main} in hand we can deduce a similar evolution  for the solution $B^\eps$ with $\eps \in [\eps_n, \eps_{n-1})$. Indeed, Proposition \ref{prop:main} encodes the continuity of the growth with respect to small perturbation of initial datum and of $\Delta$ in the equation \eqref{passive-vector} and deduce that 
$$  \| B^{\eps} (t_{k+1}) \|_{L^2}  \gtrsim \e^{\frac {\Re (p)}{4} t_{k+1}} \| B_{\initial} \|_{L^2} \,, \qquad \forall \eps \in [\eps_n , \eps_{n-1}) \,. $$
%The evolution in time of the magnetic energy of $B^\eps$ reminds Figure \ref{fig:moffatt}.

\section{Preliminaries on spectral theory and sectorial operators} \label{sec:sectorial}

 We present some background in the theory of sectorial operators that is well known, see \cite{K76}. For the convenience of the reader we present the proofs of the lemmas in Appendix \ref{sec:appendix}. Firstly, for $(X, \| \cdot \|)$ a Banach space and $T:\mathcal{D}(T) \subset X \to X$ a closed operator with dense domain we define the Riesz projector associated to $\gamma$ a contour  that is contained in the resolvent $\gamma \subset \rho(T)$ as 
\begin{equation}
\label{eq:P abstract}
P=\frac{1}{2 \pi i} \int_{\gamma} (z-T)^{-1} \dd z \,.
\end{equation}
If in the interior of the contour $\gamma$ there is a unique eigenvalue $\lambda$, then we may say that $P$ is the Riesz projector corresponding to the eigenvalue $\lambda$. We recall the definition of sectorial operators, see Figure \ref{fig:sectorial}.
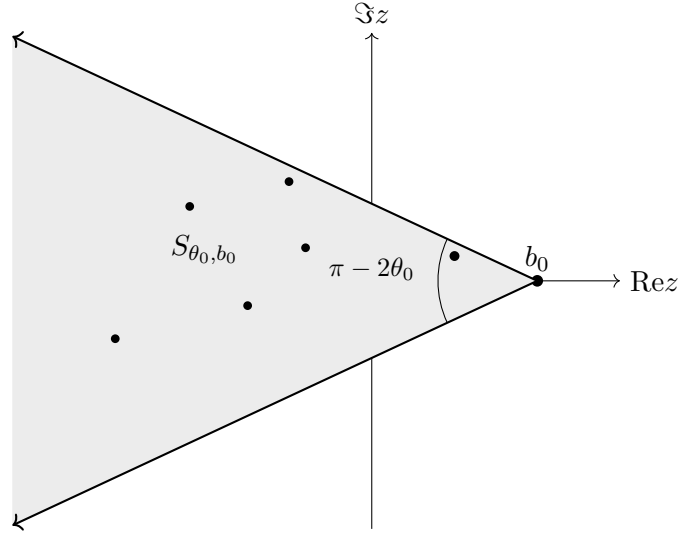
\begin{figure}[h]
    \centering
    \begin{tikzpicture}[scale=1.1]

        % axes
        \draw[->] (-4,0) -- (3,0) node[right] {$\Re z$};
        \draw[->] (0,-3) -- (0,3) node[above] {$\Im z$};

        % point b0 on the real axis
        \coordinate (b0) at (2,0);
        \fill (b0) circle (2pt) node[above] {$b_0$};

        % opening angle
        \def\alpha{25}

        % finite shading radius
        \def\R{7}

        % boundary rays directions
        \coordinate (U) at ($(b0)+(180-\alpha:\R)$);
        \coordinate (L) at ($(b0)+(180+\alpha:\R)$);

        % shaded finite wedge
        \path[fill=gray!15] (b0) -- (U) -- (L) -- cycle;

        % boundary rays extended with arrows (infinite sector)
        \draw[thick,->] (b0) -- ($(b0)+(180-\alpha:7)$);
        \draw[thick,->] (b0) -- ($(b0)+(180+\alpha:7)$);

        % label
        \node[below left, yshift=-1.05cm] at ($(b0)!0.55!(U)$) {$S_{\theta_0,b_0}$};

        % spectral points inside the wedge, all safely away from boundary
        \fill (-0.8,0.4)  circle (1.5pt);
        \fill (-1.5,-0.3) circle (1.5pt);
        \fill (-2.2,0.9)  circle (1.5pt);
        \fill (-3.1,-0.7) circle (1.5pt);
        \fill (-1.0,1.2)  circle (1.5pt);

        % NEW: spectral point inside the wedge but with Re(z) > 0
        % (still left of b0=2, but positive real part)
        \fill (1.0,0.3) circle (1.7pt);

        % label for spectrum
        %\node at (-2.3,2.2) {$\sigma(T)$};

        % optional angle indicator
        \draw[thin] (b0) ++(180-\alpha:1.2) arc (180-\alpha:180+\alpha:1.2);
        \node at ($(b0)+(-2.0,0.15)$) {\small $\pi - 2\theta_0$};

    \end{tikzpicture}
    \caption{The spectrum $\sigma(T)$ of a sectorial operator contained in the sector $S_{\theta_0,b_0}$.} \label{fig:sectorial}
\end{figure}
 
\begin{definition} \label{d:sectorial}
Let $(X, \|\cdot \|)$ be a separable Banach space and let $T:\mathcal{D}(T) \subset X \to X$ be a closed operator with dense domain. We say that $T$ is Sectorial of angle $\theta_0 \in (0, \frac{\pi}{2})$ if there exists a constant $b_0>0$ so that the spectrum of $T$ is contained in the sector $S_{\theta_0,b_0}=\{z \in \mathbb{C}:\mathrm{Arg}(b_0-z) \in [-\frac{\pi}{2}+\theta_0,\frac{\pi}{2}-\theta_0]\}$, and there exists a constant $C>0$ so that 
\begin{equation}
\|(\lambda-T)^{-1}\|\leq C \dist(\lambda,S_{\theta_0,b_0})^{-1}
\end{equation}
for all $\lambda \notin S_{\theta_0,b_0}$.
\end{definition}

A key property of sectorial operators is that their generated semigroups admit an explicit representation formula via the holomorphic functional calculus for sectorial operators. 
%{\color{red} since we do not prove this, we need a precise reference. Chapter of Kato?}
\begin{theorem}
Let $T$ be sectorial of angle $\theta_0$ with constant $b_0$. For any $\theta <\theta_0$, $b>b_0$, denote by $\gamma_{\theta,b}$ the contour obtained by traversing the boundary of the sector $S_{\theta,b}$ counterclockwise. Then, the semigroup generated by $T$ is given by 
\begin{equation*}
S(t)=\frac{1}{2 \pi i}\int_{\gamma_{\theta,b}}(z-T)^{-1} \e^{zt} \dd z.
\end{equation*}
\end{theorem}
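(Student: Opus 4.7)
The plan is to verify directly that the Dunford integral
\[
S(t) := \frac{1}{2\pi i}\int_{\gamma_{\theta,b}}(z-T)^{-1}\,\e^{zt}\,\dd z
\]
defines a $C_0$-semigroup whose generator is $T$; since a sectorial operator generates an analytic $C_0$-semigroup by classical theory, this identifies $S(t)$ with the semigroup generated by $T$. I would proceed in three steps: well-posedness and contour independence, the semigroup property, and identification via Laplace transforms.

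\textbf{Step 1 (well-posedness and contour independence).} Parametrising $\gamma_{\theta,b}$ as $z = b + r\,\e^{\pm i(\pi-\theta)}$ with $r \geq 0$, one has $|\e^{zt}| = \e^{bt}\e^{-r\cos\theta\,t}$. Since $\theta < \theta_0$ and $b > b_0$, the two rays stay at positive distance from the spectral sector $S_{\theta_0,b_0}$, so Definition~\ref{d:sectorial} yields $\|(z-T)^{-1}\| \lesssim (1+r)^{-1}$ along them. Combined, this gives absolute convergence of the Bochner integral in operator norm for every $t>0$. Independence of $(\theta,b)$ then follows by applying Cauchy's theorem to the closed contour obtained from two admissible choices joined by large arcs, whose contribution vanishes by the same bounds.

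\textbf{Step 2 (semigroup property).} To prove $S(t)S(s) = S(t+s)$ for $t,s > 0$, I would pick two admissible parameter pairs $\theta_1 < \theta_2 < \theta_0$ and $b_1 > b_2 > b_0$, so that $\gamma_1 := \gamma_{\theta_1,b_1}$ contains $\gamma_2 := \gamma_{\theta_2,b_2}$ in its enclosed sector. Writing $S(t)$ over $\gamma_1$ and $S(s)$ over $\gamma_2$, Fubini (justified by Step~1) and the resolvent identity
\[
(z-T)^{-1}(w-T)^{-1} = \frac{(z-T)^{-1}-(w-T)^{-1}}{w-z}
\]
split the product into two double integrals. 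In the first, the inner $w$-integral $\int_{\gamma_2}\e^{ws}/(w-z)\,\dd w$ vanishes for $z$ outside $\gamma_2$ by closing leftward (where $\e^{ws}$ decays) and noting the pole $w=z$ is not enclosed. In the second, the inner $z$-integral $\int_{\gamma_1}\e^{zt}/(w-z)\,\dd z$ encloses the simple pole at $z=w$ (as $w$ sits inside $\gamma_1$), yielding $-2\pi i\,\e^{wt}$ by the residue theorem. Reassembling collapses the double integral to $\frac{1}{2\pi i}\int_{\gamma_2}(w-T)^{-1}\e^{w(t+s)}\,\dd w = S(t+s)$.

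\textbf{Step 3 and main obstacle.} I identify $S(t)$ with the analytic semigroup $T(t)$ generated by $T$ by matching Laplace transforms: for $\Re\lambda > b$, Fubini and a single residue computation give
\[
\int_0^\infty \e^{-\lambda t}\,S(t)\,\dd t \;=\; \frac{1}{2\pi i}\int_{\gamma_{\theta,b}}\frac{(z-T)^{-1}}{\lambda-z}\,\dd z \;=\; (\lambda-T)^{-1},
\]
where the second equality follows by closing the $z$-contour to the right and picking up the simple pole at $z=\lambda$. This matches the Laplace transform of $T(t)$ and forces equality. The most delicate point is strong continuity $S(t)x \to x$ as $t \to 0^+$, needed to legitimise this Laplace-transform comparison. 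For $x \in \mathcal{D}(T)$ I would decompose $(z-T)^{-1}x = x/z + (z-T)^{-1}Tx/z$: the first term produces $x$ via the identity $\frac{1}{2\pi i}\int_{\gamma_{\theta,b}}\e^{zt}/z\,\dd z = 1$ (a standard residue computation at $z=0$, which lies inside the sector), while the second tends to zero as $t\to 0^+$ by dominated convergence, since its integrand is controlled by $(1+|z|)^{-2}\|Tx\|$ uniformly in $t\in[0,1]$. Density of $\mathcal{D}(T)$ together with the uniform bound $\|S(t)\| \leq M$ on $(0,1]$ (obtained from Step~1 by the rescaling $z \mapsto z/t$ of the contour) then extends strong continuity to all of $X$.
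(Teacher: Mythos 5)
The paper does not actually prove this statement: the text preceding it explicitly says that ``for the convenience of the reader we present the proofs of the \emph{lemmas} in Appendix,'' and the theorem (a classical generation theorem for analytic semigroups, found e.g.\ in Kato's book, Engel--Nagel, or Lunardi) is quoted as known background; only Lemmas~\ref{lemma:sectorial1}, \ref{lemma:sectorial2}, and \ref{lemma:kato-convergence} are proved. So there is no paper proof to compare against.

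On its own terms, your proposal is the standard textbook argument and is sound in outline: absolute convergence and contour independence, the semigroup property via the resolvent identity and a nested pair of contours, and identification of the generator through the Laplace transform together with strong continuity at $t=0^+$ proved on $\mathcal{D}(T)$ and extended by density and a uniform bound. Two small points of care. First, with the paper's definition of $S_{\theta,b}$, the boundary rays are $z=b+r\e^{\pm i(\frac{\pi}{2}+\theta)}$, $r\geq 0$, not $z=b+r\e^{\pm i(\pi-\theta)}$; this changes the decay exponent from $\cos\theta$ to $\sin\theta$, which is harmless for convergence but worth stating correctly. Second, the uniform bound $\|S(t)\|\leq M$ on $(0,1]$ via the naive substitution $z\mapsto z/t$ is not quite right as written: it moves the vertex of the contour to $bt$, which for small $t$ falls below $b_0$ and out of the region where the resolvent estimate holds. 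The standard fix is to use a contour consisting of two rays plus an arc of radius $\sim 1/t$ around the vertex (or, equivalently, to deform to $\gamma_{\theta,b_0+1/t}$), which keeps the contour in the resolvent set while still furnishing the $O(1)$ bound. With those adjustments the argument is complete and correct.
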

As a result of this representation formula, we deduce the following result on the semigroup generated by sectorial operators with compact resolvents.
\begin{lemma}
\label{lemma:sectorial1}
Let $(T,\mathcal{D}(T))$ be a sectorial operator with compact resolvent on some Hilbert space $H$, with $\sigma(T) \subset S_{\theta_0,b_0}$ as in Definition \ref{d:sectorial}. 
% In particular, we assume that there exists $b_0>0$, $\theta_0 \in (0,\frac{\pi}{2})$ so that the sector $S_{\theta_0,b_0}=\{z \in \mathbb{C}: Arg(b_0-z) \in [-\frac{\pi}{2}+\theta_0,\frac{\pi}{2}-\theta_0]\}$ satisfies 
% \begin{enumerate}
%     \item $\sigma(T) \subset S_{\theta_0,b_0}$
%     \item There exists $C>0$ so that $\|(\lambda-T)^{-1}\|\leq C\dist(\lambda,S_{\theta_0,b_0})^{-1}$ for all $\lambda \in \mathbb{C} \setminus S_{\theta_0,b_0}$.
% \end{enumerate}
Then, for all $p < b_0$, and all constants $\zeta>0$ small enough, there exist contours $\gamma_1, \gamma_2$ so that $\gamma_1 \subset \{\Re(z)\leq p-\zeta\}$, $\gamma_2 \subset \{\Re(z) \geq p-\frac{\zeta}{2}\}$ and the Semigroup generated by $T$ is equal to 
\begin{equation}
\e^{T t}=\frac{1}{2 \pi i}\int_{\gamma_1} (z-T)^{-1} \e^{zt} \dd z+\frac{1}{2 \pi i}\int_{\gamma_2} (z-T)^{-1} \e^{zt} \dd z=:S_1(t)+S_2(t) \,.
\end{equation}
Furthermore, there holds $\liminf_{t \to \infty}\frac{1}{t}\|\e^{T t}x\| \geq p-\frac{\zeta}{2}$ if and only if $Px \neq 0$, where $P$ is the Riesz projector associated to $\gamma_2$ as in \eqref{eq:P abstract}. 
% and in fact there holds the estimate for all $t \geq 1$
% \begin{equation}
% \|\e^{T t}x \|\geq C_1(\gamma_2, \sup_{z \in \gamma_2}\|(z-T)^{-1}\|)e^{t(p-\frac{\zeta}{2})}\|Px\|-C_2(\gamma_1, \sup_{z \in \gamma_1}\|(z-T)^{-1}\|)e^{t(p-\zeta)}\|x\|,
% \end{equation}
% where the constants $C_1, C_2$ are continuous in their second argument.
\end{lemma}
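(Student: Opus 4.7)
The plan is to use the semigroup representation from the preceding theorem together with the spectral discreteness forced by compactness of the resolvent. Since $T$ has compact resolvent, $\sigma(T)$ consists of isolated eigenvalues of finite algebraic multiplicity accumulating only at infinity. Because $p<b_0$ and the sector $S_{\theta_0,b_0}$ opens leftward from $b_0$, the intersection $\sigma(T)\cap\{\Re z\ge p-\zeta\}$ is contained in a bounded subset of the sector and is therefore finite. Shrinking $\zeta>0$ if necessary, I may further assume that no eigenvalue of $T$ has real part in the closed interval $[p-\zeta,p-\zeta/2]$.

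First, I would construct the contours by deforming $\gamma_{\theta,b}$ from the preceding theorem using Cauchy's theorem in the resolvent set: let $\gamma_2$ be a bounded closed contour enclosing the finitely many eigenvalues with $\Re\lambda>p-\zeta/2$ and contained in $\{\Re z\ge p-\zeta/2\}$, and let $\gamma_1$ be the modification of $\gamma_{\theta,b}$ obtained by pushing the right-most vertex of the sectorial contour leftward to the vertical line $\{\Re z=p-\zeta\}$. Since no spectrum lies between the two contours, the deformation is legitimate and yields $e^{Tt}=S_1(t)+S_2(t)$; moreover, by the holomorphic functional calculus, $S_2(t)=e^{Tt}P$ and $S_1(t)=e^{Tt}(I-P)$, where $P$ is the Riesz projector of \eqref{eq:P abstract} along $\gamma_2$.

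For the equivalence, I split $x=Px+(I-P)x$ and establish two estimates. The first is the upper bound $\|S_1(t)\|\le C e^{(p-\zeta)t}$, which comes from the sectorial resolvent bound $\|(z-T)^{-1}\|\lesssim \dist(z,S_{\theta_0,b_0})^{-1}$ on $\gamma_1$, the uniform bound $|e^{zt}|\le e^{(p-\zeta)t}$ on the portion of $\gamma_1$ with $\Re z\le p-\zeta$, and absolute convergence of the integral on the two unbounded rays where $\Re z\to-\infty$ fast enough to counteract the arc-length. The second is a lower bound on $e^{Tt}Px$: since $\Ran(P)$ is finite-dimensional, the restriction of $T$ there is a matrix whose spectrum consists of the enclosed eigenvalues, all with real part at least $p-\zeta/2$, so a Jordan-form computation gives $\|e^{Tt}Px\|\gtrsim t^{-k}e^{(p-\zeta/2)t}\|Px\|$ for some integer $k\ge 0$ whenever $Px\ne 0$. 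Combining these, $Px=0$ yields $e^{Tt}x=S_1(t)x$ and hence $\limsup\frac{1}{t}\log\|e^{Tt}x\|\le p-\zeta<p-\zeta/2$, while $Px\ne 0$ makes the growing finite-dimensional piece dominate the decaying one and produces $\liminf\frac{1}{t}\log\|e^{Tt}x\|\ge p-\zeta/2$.

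The main obstacle I expect is the quantitative estimate for $S_1(t)$: one must track the geometry of $\gamma_1$ near the kink where it leaves the original sector boundary to cross the vertical line $\{\Re z=p-\zeta\}$, and verify that the sectorial resolvent bound still applies on the modified contour (which follows from the uniform separation of $\gamma_1$ from $\sigma(T)$ obtained in the first step) so that the exponential weight $e^{zt}$ provides the advertised decay. Once this contour integral is controlled, the rest reduces to the finite-dimensional spectral analysis on $\Ran(P)$.
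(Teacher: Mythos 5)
Your construction of the contours, the decomposition $e^{Tt}=S_1(t)+S_2(t)$ with $S_2(t)=e^{Tt}P$, and the upper bound on $\|S_1(t)\|$ all match the paper's argument. Where you diverge is the lower bound on $\|e^{Tt}Px\|$: you reduce to the finite-dimensional space $\Ran(P)$ (finite rank by compactness of the resolvent) and invoke a Jordan-form computation, whereas the paper notes that $S_2$ extends to a group and bounds $\|S_2(-t)\| \leq \frac{1}{2\pi}\operatorname{len}(\gamma_2)\sup_{z\in\gamma_2}\|R_z\|\,e^{-t(p-\zeta/2)}$ directly from the contour integral, then writes $\|Px\|=\|S_2(-t)S_2(t)x\|$ to conclude $\|S_2(t)x\|\gtrsim e^{t(p-\zeta/2)}\|Px\|$. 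The two routes prove the same thing; the paper's yields a constant uniform in $x$, which is not needed for the stated equivalence but is what they later use in Lemma~\ref{lemma:sectorial2}, while your finite-dimensional argument is more elementary.

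One small inaccuracy: the Jordan-form bound should read $\|e^{Tt}Px\|\gtrsim t^{j}e^{(p-\zeta/2)t}$ for some $j\ge 0$ (polynomial \emph{growth} from nilpotent parts, not decay); there is no $t^{-k}$ factor. In fact, since the eigenvalues enclosed by $\gamma_2$ have real part strictly greater than $p-\zeta/2$ (the strip $[p-\zeta,p-\zeta/2]$ was cleared of spectrum), the bound $\gtrsim e^{(p-\zeta/2)t}$ holds with a constant for $t$ large. Either way the logarithmic growth rate is unaffected, so this does not damage the proof.
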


As a consequence of the previous lemma, we obtain a continuity property for sectorial operators with compact resolvent of a particular structure. 
Specifically, we will apply the lemma below with 
\begin{align} \label{eq:applications}
    T_1 = \nabla \times (U \times \cdot), 
\qquad 
T_0 = \varepsilon \Delta, 
\qquad 
\varepsilon > 0\,,
\end{align}
to deduce that the operator 
\[
\kappa T_0 + T_1
\]
exhibits exponential growth for $\kappa \approx 1$, assuming the exponential growth for $\kappa=1$. 
Moreover, this growth is uniform with respect to small perturbations of the initial data. This lemma is crucial in order to obtain the result $\liminf_{\eps \to 0}$ in Theorem \ref{thm:main}.

\begin{lemma}
\label{lemma:sectorial2}
Let $T(\kappa)=\kappa T_0 +T_1$, where $(T_0, \mathcal{D}(T_0))$ is a negative, self-adjoint operator with compact resolvent, and for any $\delta>0$ there exists a constant $C(\delta)$, continuous in $\delta$ so that for all $x \in \mathcal{D}(T_0)$ it holds 
\begin{equation}
\|T_1 x\|\leq \delta \|T_0 x\|+C(\delta)\|x\|.
\end{equation}
Suppose further that there exists $x_0 \in H$ so that $\liminf_{t \to \infty}\frac{1}{t}\|e^{T(1) t} x_0\| = p >0$. Then, there exist constants $\kappa_0>0$, $\eta_0>0$, $C>0$, $T>0$ depending only on $\frac{x_0}{\|x_0\|}, T(1)$, so that for all $|1-\kappa|\leq \kappa_0$, $\|x-x_0\|\leq \eta_0 \| x_0\|$ it holds
\begin{equation}
\|\e^{T(\kappa)t}x\|\geq C \e^{\frac{p}{2}t}\|x\|, \quad \forall t \geq T\,.
\end{equation}
\end{lemma}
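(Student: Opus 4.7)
My plan is to use spectral perturbation theory together with the semigroup decomposition from Lemma \ref{lemma:sectorial1}. The strategy is to transport the splitting of $\e^{T(1)t}$ into a slow part and a fast part to nearby operators $T(\kappa)$, and then exploit the finite-dimensional nature of the fast part.

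First, I would verify that $T(\kappa)$ is sectorial with compact resolvent for $\kappa$ in a neighborhood of $1$. Since $T_0$ is negative self-adjoint, the operator $\kappa T_0$ is sectorial of any half-angle less than $\pi/2$ for $\kappa>0$. Applying the relative bound hypothesis with $\delta$ small yields, by Kato--Rellich type perturbation, sectoriality of $T(\kappa)$ with sector parameters continuous in $\kappa$; compactness of the resolvent is preserved via the second resolvent identity since $T_1(\kappa T_0 - z)^{-1}$ is bounded. In particular, the spectrum of $T(\kappa)$ is discrete, and in any half-plane $\{\Re z \geq a\}$ only finitely many eigenvalues lie. I then apply Lemma \ref{lemma:sectorial1} to $T(1)$ with $\zeta \in (0,p)$ chosen small (say $\zeta = p/2$), producing a compact contour $\gamma_2$ surrounding the finitely many eigenvalues with $\Re z \geq p - \zeta/2$. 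Since the growth rate of $\e^{T(1)t}x_0$ is at least $p > p - \zeta/2$, Lemma \ref{lemma:sectorial1} yields $P_2(1)x_0 \neq 0$, where $P_2(1)$ is the Riesz projector associated to $\gamma_2$.

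Next I would transport this spectral picture to $T(\kappa)$ via a Neumann-series argument. For $z \in \gamma_2$ and $|\kappa - 1|$ small,
\begin{equation*}
(z - T(\kappa))^{-1} = (z - T(1))^{-1} \sum_{n \geq 0} \bigl((\kappa - 1)\, T_0\, (z - T(1))^{-1}\bigr)^n,
\end{equation*}
converges uniformly on the compact contour $\gamma_2$ because $T_0(z - T(1))^{-1}$ is bounded and $\gamma_2 \subset \rho(T(1))$. Hence $\gamma_2 \subset \rho(T(\kappa))$ and the Riesz projector $P_2(\kappa) = \frac{1}{2\pi i}\int_{\gamma_2}(z - T(\kappa))^{-1}\,\dd z$ satisfies $\|P_2(\kappa) - P_2(1)\| \to 0$ as $\kappa \to 1$. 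Consequently, for $\kappa_0, \eta_0$ small enough, $\|P_2(\kappa)x - P_2(1)x_0\| \leq \tfrac{1}{2}\|P_2(1)x_0\|$ whenever $|\kappa - 1| \leq \kappa_0$ and $\|x - x_0\| \leq \eta_0 \|x_0\|$; in particular $\|P_2(\kappa)x\| \geq c_0 \|x\|$ for a constant $c_0 > 0$ depending only on $x_0/\|x_0\|$ and $T(1)$.

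Finally, I would use the decomposition $\e^{T(\kappa)t}x = S_1(\kappa,t)x + S_2(\kappa,t)x$ of Lemma \ref{lemma:sectorial1}. Standard sectorial estimates give $\|S_1(\kappa,t)\| \leq C_1 \e^{(p-\zeta)t}$ uniformly in $\kappa$ near $1$. On the finite-dimensional invariant subspace $\Ran(P_2(\kappa))$, the restriction $T(\kappa)|_{\Ran(P_2(\kappa))}$ is a matrix whose eigenvalues all have real part $\geq p - \zeta/2$ and which depends continuously on $\kappa$. A compactness argument in $\kappa$, combined with finite-dimensional matrix exponential theory, then produces a uniform bound $\|\e^{T(\kappa)t}v\| \geq c_1 \e^{(p-\zeta/2)t}\|v\|$ for $v \in \Ran(P_2(\kappa))$, $t \geq T$, and $\kappa$ in a neighborhood of $1$. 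Combining with $\|P_2(\kappa)x\| \geq c_0 \|x\|$ and the bound on $S_1$,
\begin{equation*}
\|\e^{T(\kappa)t}x\| \geq c_0 c_1 \e^{(p-\zeta/2)t}\|x\| - C_1 \e^{(p-\zeta)t}\|x\| \geq C \e^{(p/2)t}\|x\|
\end{equation*}
for $t$ large enough, as required. The main obstacle I anticipate is obtaining the uniform lower bound on $S_2(\kappa,t)$: since $T(\kappa)|_{\Ran(P_2(\kappa))}$ may have nontrivial Jordan blocks, the matrix exponential carries polynomial prefactors that must be controlled uniformly in $\kappa$, which is delicate because the Jordan structure itself is not continuous in $\kappa$.
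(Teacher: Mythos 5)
Your overall architecture matches the paper's: Kato--Rellich sectoriality of $T(\kappa)$ uniformly for $\kappa$ near $1$, the $S_1+S_2$ split from Lemma~\ref{lemma:sectorial1} applied to $T(1)$, a resolvent perturbation argument giving $\gamma_2\subset\rho(T(\kappa))$ and $\|P_2(\kappa)-P_2(1)\|\to 0$, and the final combination $\|\e^{T(\kappa)t}x\|\geq c_1\e^{(p-\zeta/2)t}\|P_2(\kappa)x\|-C_1\e^{(p-\zeta)t}\|x\|$. The step where you diverge, and where there is a genuine gap, is the lower bound on $S_2(\kappa,t)$.

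You propose to lower-bound $\e^{T(\kappa)t}$ on $\Ran(P_2(\kappa))$ by finite-dimensional matrix exponential theory plus a compactness argument in $\kappa$, and you explicitly flag the difficulty: the Jordan structure of $T(\kappa)|_{\Ran(P_2(\kappa))}$ is not continuous in $\kappa$, so the polynomial prefactors are not controlled. This concern is real for the approach as stated, and you do not resolve it, so the proof is not complete as written. Moreover, for a fixed $\kappa$ with a nontrivial Jordan block the raw bound $\|\e^{T(\kappa)t}v\|\geq c\,\e^{at}\|v\|$ on all of $\Ran(P_2(\kappa))$ (with $a$ equal to the minimal real part of the eigenvalues) actually fails --- take $v$ to be a suitable $t$-dependent combination of a Jordan chain; the bound only holds with a strictly smaller exponent, and the constant then comes from the gap between the contour $\gamma_2$ and the eigenvalues, not from ``matrix exponential theory'' per se.

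The paper avoids all of this by never looking at the Jordan structure. Inside Lemma~\ref{lemma:sectorial1}, $S_2$ is represented by a contour integral over the closed curve $\gamma_2$, which makes sense for negative $t$ as well, so $S_2$ generates a \emph{group} on $\Ran P_2$ and $S_2(-t)S_2(t)=P_2$. The trivial operator-norm estimate $\|S_2(-t)\|\leq\frac{1}{2\pi}\mathrm{len}(\gamma_2)\,\sup_{z\in\gamma_2}\|(z-T)^{-1}\|\,\e^{-t(p-\zeta/2)}$ then gives $\|S_2(t)x\|\geq C_1\e^{t(p-\zeta/2)}\|P_2 x\|$ with $C_1$ depending only on $\mathrm{len}(\gamma_2)$ and $\sup_{z\in\gamma_2}\|(z-T)^{-1}\|$ --- the slack between $\Re(z)=p-\zeta/2$ on $\gamma_2$ and the eigenvalues strictly inside absorbs any Jordan polynomial. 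In the proof of the present lemma the paper then shows $\sup_{z\in\gamma_1\cup\gamma_2}\|(z-T(\kappa))^{-1}\|\leq C(K)$ uniformly for $|\kappa-1|\leq\tfrac12 K^{-1}$ via a resolvent identity of the same flavor as your Neumann series, so $C_1$ and $C_2$ are uniform in $\kappa$ for free. Your Neumann-series expansion already contains everything needed to establish that uniformity; the fix is simply to invoke the group-based lower bound from Lemma~\ref{lemma:sectorial1} together with your uniform resolvent bound, rather than attempting a direct matrix exponential analysis.

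Two smaller points. First, you show $\gamma_2\subset\rho(T(\kappa))$, but to apply the decomposition of Lemma~\ref{lemma:sectorial1} to $T(\kappa)$ you also need the whole strip $\{\Re z\in[p-\zeta,p-\zeta/2]\}$ (intersected with the sector) to be spectrum-free for $T(\kappa)$; this follows from the same perturbation estimate over that compact set, so it is easy to supply, but it should be stated. Second, your uniform bound $\|S_1(\kappa,t)\|\leq C_1\e^{(p-\zeta)t}$ again needs the uniform-in-$\kappa$ resolvent bound on $\gamma_1$, which your argument gives but which you do not make explicit.
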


We develop a Taylor expansion of the eigenvalues with respect to $j$ of the operator $T_0 + j T_1$ as $j \to 0$.

\begin{lemma} \label{lemma:kato-convergence}
Let $T(j)=T_0+j T_1$, where $T_1$ is relatively bounded with respect to the closed operator $(T_0,\mathcal{D}(T_0))$ on some Banach space $X$. Assume that $p$ is an isolated, semisimple eigenvalue of $T_0$, and assume further that the eigenvalues of the operator $PT_1P:PX \to PX$ are all distinct, where $P$ is the Riesz projector corresponding to the eigenvalue $p$ of $T_0$. Then, $T(j)$ admits eigenvalues of the form 
\begin{equation}
p_i(j)=p+j\mu_i +o(j),
\end{equation}
where $\mu_i$ are the eigenvalues of the operator $PT_1P:PX \to PX$. Let $P_i^{(1)}$ be the Riesz projectors associated to the eigenvalues of $PT_1P$, and denote by $P_i(j)$ the projectors associated to the eigenvalues $p_i(j)$, which act on $X$ by $P_i^{(1)} x=P_i^{(1)} Px$. Then, in the strong operator topology
$$P_i(j)  \to P_i^{(1)}\,, \quad \text{ as } \,  j \to 0\,.$$
\end{lemma}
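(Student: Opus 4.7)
My plan is to combine Kato's analytic perturbation framework for isolated semisimple eigenvalues with a Lyapunov--Schmidt reduction to the (finite-dimensional) range of $P$, where the distinctness hypothesis on the eigenvalues of $PT_1P$ can be exploited directly.

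I would first observe that, since $p$ is an isolated eigenvalue of $T_0$, there is $r>0$ such that the contour $\gamma:=\{z:|z-p|=r\}$ lies in $\rho(T_0)$ and encircles no other point of $\sigma(T_0)$. The relative boundedness of $T_1$ with respect to $T_0$ makes $T_1(z-T_0)^{-1}$ bounded uniformly in $z\in\gamma$, so for $|j|$ small the Neumann series
\[
(z-T(j))^{-1}=(z-T_0)^{-1}\sum_{n\geq 0}j^{n}\bigl(T_1(z-T_0)^{-1}\bigr)^{n}
\]
converges uniformly on $\gamma$. Integrating over $\gamma$ yields that the total Riesz projector
\[
P(j):=\frac{1}{2\pi i}\int_{\gamma}(z-T(j))^{-1}\dd z = P+O(j)
\]
is analytic in $j$ and has constant rank $\dim PX$ for $|j|$ small; thus the spectrum of $T(j)$ inside $\gamma$ is exactly the group of eigenvalues bifurcating from $p$.

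To resolve this group I would perform a Lyapunov--Schmidt decomposition relative to the unperturbed splitting $X=PX\oplus(1-P)X$. Writing $\psi=\phi+\chi$ with $\phi=P\psi$, $\chi=(1-P)\psi$, the eigenvalue equation $(z-T(j))\psi=0$ becomes
\begin{align}
(z-p)\phi &= jP T_1(\phi+\chi), \\
(z-T_0)\chi &= j(1-P)T_1(\phi+\chi).
\end{align}
Since $(z-T_0)$ is boundedly invertible on $(1-P)X$ uniformly for $z$ near $p$, the second equation is solved for $\chi$ in terms of $\phi$ by a contraction argument, giving $\chi=j S(z)T_1\phi+O(j^{2})$ with $S(z):=(z-T_0)^{-1}(1-P)$. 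Substituting into the first equation yields the reduced eigenvalue problem on the finite-dimensional space $PX$,
\[
\bigl(z-p-j\,PT_1P+O(j^{2})\bigr)\phi=0,
\]
where the $O(j^{2})$ term is a holomorphic operator-valued function on $PX$. Since by hypothesis $PT_1P$ has $\dim PX$ distinct simple eigenvalues $\mu_i$, finite-dimensional holomorphic perturbation theory produces $\dim PX$ analytic branches of simple eigenvalues $p_i(j)=p+j\mu_i+o(j)$.

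For the projector convergence I would exploit that the $p_i(j)$ are separated by a gap of order $j$ for small $j$, so small disjoint contours $\gamma_i$ encircling exactly one $p_i(j)$ can be chosen. Setting $P_i(j):=\frac{1}{2\pi i}\int_{\gamma_i}(z-T(j))^{-1}\dd z$, rescaling $z\mapsto p+jw$ so that $\gamma_i$ becomes a fixed contour around $\mu_i$ in the $w$-plane, and using the Lyapunov--Schmidt representation of the resolvent together with $\chi\to 0$ as $j\to 0$, one verifies $P_i(j)x\to P_i^{(1)}Px$ for every $x\in X$. The main obstacle, in my view, is controlling this passage to the limit carefully: because only relative boundedness of $T_1$ is assumed, one must establish uniform bounds for the reduced resolvent on the $j$-dependent rescaled contours, and ensure that the contraction argument for $\chi$ remains valid down to $j=0$; everything else is a standard application of the analytic perturbation machinery.
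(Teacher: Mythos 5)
Your proposal is correct, but it takes a genuinely different route from the paper. The paper's proof is organized around Kato's transformation operator $V(j)=I-P+P(j)P$, which intertwines $\Ran(P)$ with $\Ran(P(j))$: one expands $V(j)^{-1}R(\mu;T(j))V(j)P$ to first order, integrates to get $V(j)^{-1}T(j)P(j)V(j)P = pP + jPT_1P + o(j)$, and then, for the projector convergence, passes to the rescaled operator $\tilde{T}^{(1)}(j)=j^{-1}(T(j)-p)P(j)$, picks an eigenvector basis $\psi_i(j)\to\psi_i$ of the similar operator on $PX$, pushes it forward by $V(j)$, and shows the expansion coefficients of $P(j)x$ in this basis converge. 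Your approach instead performs a Lyapunov--Schmidt reduction in the fixed splitting $X = PX\oplus(1-P)X$, eliminates the $(1-P)$-component by a contraction (which is legitimate here since relative boundedness makes $S(z)(1-P)T_1$ bounded in the graph norm of $\mathcal{D}(T_0)$, and $PX\subset\mathcal{D}(T_0)$ by semisimplicity), and obtains a holomorphic finite-dimensional reduced pencil $z-p-jPT_1P+O(j^2)$ on $PX$; the distinctness hypothesis then gives the simple analytic branches $p_i(j)$, and the projectors are handled via rescaled contours $z=p+jw$. The two routes buy different things: the paper's $V(j)$-intertwining avoids any contour rescaling and makes the reduced operator live on a fixed space from the start, whereas your Lyapunov--Schmidt route makes the analyticity in $j$ and the finite-dimensional nature of the bifurcation equation more transparent. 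You correctly flag that the remaining work is the uniform control of the reduced resolvent on the $j$-dependent rescaled contours; this is a real step that needs to be written out (e.g.\ showing $j(p+jw-T(j))^{-1}\to (w-PT_1P)^{-1}P$ strongly, uniformly on the fixed $w$-contour), but it is standard and does not represent a gap in the approach.
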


Finally, we show that for autonomous or time-periodic vector fields on $\mathbb{T}^3$ (or more generally on any compact domain), the dynamo growth rate $\gamma_\eps$ in \eqref{d:gamma} coincides with the $\limsup$ dynamo growth rate $\overline{\gamma_\eps}$ in \eqref{d:gamma-over}. 
\begin{lemma}
\label{lemma:equivalent}
Let $u \in L^\infty([0,\infty),W^{1,\infty}(\mathbb{T}^3))$ be a time periodic divergence--free velocity field. Then, the following holds true 
$${\gamma_\eps}=\overline{\gamma_\eps}\,, \qquad \forall \eps >0 \,.$$
\end{lemma}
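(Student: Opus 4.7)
The plan is to pass from the continuous $t\to\infty$ limit to discrete iterates of the period map, and then to invoke Riesz spectral theory for compact operators to force the discrete growth rate to actually converge, so that $\liminf=\limsup$.

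First I would let $T_p>0$ be the period of $u$ and denote by $S(t,s)$ the propagator of \eqref{passive-vector}. Time-periodicity of the coefficients gives $S(t+T_p,s+T_p)=S(t,s)$, so the period map $\Phi_\eps:=S(T_p,0)$ satisfies $B^\eps(nT_p)=\Phi_\eps^n B_{\text{in}}^\eps$. Testing \eqref{passive-vector} against $B^\eps$ and using $\diver u=0$ yields $\|S(t,s)\|_{L^2\to L^2}\leq e^{\|\nabla u\|_{L^\infty}(t-s)}$, and hence the two-sided bound
\[
e^{-\|\nabla u\|_{L^\infty} T_p}\|B^\eps((n+1)T_p)\|_{L^2}\leq \|B^\eps(t)\|_{L^2}\leq e^{\|\nabla u\|_{L^\infty} T_p}\|B^\eps(nT_p)\|_{L^2}
\]
for every $t\in[nT_p,(n+1)T_p]$. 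Since $t/(nT_p)\to 1$, this identifies $\gamma_\eps$ and $\overline{\gamma_\eps}$ with $\liminf_n$ and $\limsup_n$ of $a_n:=\tfrac{1}{nT_p}\log\|\Phi_\eps^n B_{\text{in}}^\eps\|_{L^2}$, respectively. A second ingredient is that $\Phi_\eps$ is compact on $L^2(\TT^3)$: parabolic smoothing (the energy inequality gives $\eps\int_0^{T_p}\|\nabla B^\eps\|_{L^2}^2\,ds\lesssim\|B_{\text{in}}^\eps\|_{L^2}^2$, combined with the Duhamel representation against $e^{\eps t\Delta}$) produces $\Phi_\eps:L^2\to H^1$ boundedly, and Rellich does the rest. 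Hence $\sigma(\Phi_\eps)=\{0\}\cup\{\lambda_1,\lambda_2,\dots\}$ with $|\lambda_1|\geq|\lambda_2|\geq\cdots\to 0$ and finite-rank Riesz projectors $P_k$.

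The final step is to show that $a_n$ actually converges. Set $Q_N:=\sum_{k\leq N}P_k$ and $k^\ast:=\min\{k:P_kB_{\text{in}}^\eps\neq 0\}$, with the convention $k^\ast=\infty$ if the set is empty. If $k^\ast<\infty$, pick $N\geq k^\ast$ with $|\lambda_{N+1}|<|\lambda_{k^\ast}|$; Jordan form on the finite-dimensional $\Phi_\eps$-invariant space $Q_N L^2$ gives $\|\Phi_\eps^n Q_N B_{\text{in}}^\eps\|\sim c\,|\lambda_{k^\ast}|^n n^m$ for some integer $m\geq 0$, while Gelfand's formula applied to $\Phi_\eps|_{\ker Q_N}$ (whose spectral radius is $|\lambda_{N+1}|$) yields the negligible tail $\|\Phi_\eps^n(I-Q_N)B_{\text{in}}^\eps\|\leq C_N(|\lambda_{N+1}|+\eta)^n$ for every $\eta>0$; combining gives $a_n\to\tfrac{1}{T_p}\log|\lambda_{k^\ast}|$. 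If instead $k^\ast=\infty$, then $Q_N B_{\text{in}}^\eps=0$ for every $N$, and sending $N\to\infty$ in the same Gelfand bound (using $|\lambda_N|\to 0$) forces $a_n\to -\infty$. Either way $\liminf a_n=\limsup a_n$, which together with the reduction above proves the lemma.

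The main subtlety I anticipate is the residual case $k^\ast=\infty$: because $\Phi_\eps$ is not assumed normal, the closed linear span of its generalised eigenspaces need not exhaust $L^2(\TT^3)$, and one must genuinely exploit that the spectral radius of $\Phi_\eps|_{\ker Q_N}$ vanishes as $N\to\infty$. The remaining ingredients (the energy estimate and parabolic smoothing) are standard.
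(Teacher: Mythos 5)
Your argument is correct, and it shares with the paper's proof the two load-bearing ingredients: the reduction from continuous time to powers of the compact monodromy operator $\Phi_\eps = S(T_p,0)$ via the two-sided energy estimate, and Gelfand's spectral radius formula for the tail. The routes diverge after that. The paper's proof selects one distinguished initial datum --- an eigenfunction of $\Phi_\eps$ at the spectral radius --- and shows $\gamma_\eps = \log r(\Phi_\eps)$ for that datum directly, matching the Gelfand upper bound $\overline{\gamma_\eps}\le\log r(\Phi_\eps)$ that holds for every datum. This suffices for the intended application (equivalence of the fast dynamo and $\limsup$ fast dynamo notions), but it does not, as written, establish $\gamma_\eps=\overline{\gamma_\eps}$ for an arbitrary $B^\eps_{\text{in}}$. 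Your proof does: by identifying the smallest index $k^\ast$ with $P_{k^\ast}B^\eps_{\text{in}}\neq 0$ in the Riesz decomposition, using the Jordan normal form on the finite-dimensional invariant space $Q_N L^2$ for the lower bound, and the Gelfand estimate on $\Phi_\eps|_{\ker Q_N}$ (with spectral radius $|\lambda_{N+1}|$) for the tail, you get that the discrete growth rate $a_n$ actually converges, to $\tfrac{1}{T_p}\log|\lambda_{k^\ast}|$ or to $-\infty$, so $\liminf=\limsup$ for every initial datum. The subtlety you flag --- that the generalized eigenspaces of a compact, non-normal operator need not have dense span --- is indeed why the $k^\ast=\infty$ case must go through the vanishing of $|\lambda_{N+1}|$ rather than a density argument, and you handle it correctly. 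In short, yours is a strengthening of the paper's proof, at the cost of a longer spectral case analysis; the paper's version is shorter because it only needs to exhibit one good initial datum.
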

\begin{proof}
For the proof we set $\eps=1$ and $u$ be a $1$ periodic velocity field to ease readability. 
Let $S(t,s)$ be the propagator associated to the 
\begin{equation*}
\partial_t B =\nabla \times (u\times B)+\Delta B,
\end{equation*}
in other words, $S(t,s)B_0$ is the solution to \eqref{passive-vector} with velocity field $u$ and initial condition $B_0$ starting from time $s$ and posing the equation for $t > s$.
%Note that $U$ satisfies the composition property $U(t,s)=U(t,z)U(z,s)$ for any $s<z<t$, and $U(t+1,s+1)=U(t,s)$. 
Furthermore, let $T=S(1,0)$ be the monodromy operator associated to the problem. 
We claim that 
$$\log r(T) \leq {\gamma}_1 \leq \overline{\gamma_1} \leq \log r(T)\,,$$ where $r(T)$ denotes the spectral radius of $T$. We firstly prove the lower bound and without loss of generality we assume that $r(T)>0$. Since $T$ is compact, we can pick an eigenfunction $B$ with eigenvalue $|\lambda|=r(T)$, and then set $B(t)=S(t,0)B$. Clearly $\|B(n)\|_{L^2}=|\lambda|^n \|B\|_{L^2}$, so it remains to obtain a lower bound in the intermediate region $t \in (n,n+1)$. By 
standard well-posedness theory, the map $t \mapsto S(t,0)B$ is continuous from $[0,1] \mapsto L^2(\mathbb{T}^3)$ and never vanishing. Thus, $\|B(t)\|_{L^2}$ attains a strictly positive minimum on $[0,1]$, which we call $m>0$. Thus, for $t \in [n,n+1]$ we bound 
$$
\|B(t)\|_{L^2}=\|S(t,n)\lambda^n B\|_{L^2}=|\lambda|^n \|S(t-n,0)B\|_{L^2} \geq m|\lambda|^n.
$$
Therefore, it follows that $\gamma_1 \geq \log (|\lambda|) = \log (r(T)) $.
The upper bound is a straightforward application of the  Gelfand spectral radius formula (see e.g. \cite{K76}) 
\begin{equation*}
r(T)=\limsup_{n \to \infty}\|T^n\|^{1/n} 
\end{equation*}
and the upper bound $\|B(t)\|_{L^2} \leq  \|B(n)\|\exp({\|u\|_{L^\infty_t W^{1,\infty}_x}})$ for any $t \in [n, n+1]$ by energy estimates.
% Hence, it remains to prove the claimed upper bound. To see this, note that the Gelfand spectral radius formula (see e.g. \cite{K76}) implies that 
% \begin{equation*}
% r(S)=\lim_{n \to \infty}\|S^n\|^{1/n} \,.
% \end{equation*}
% Furthermore, by standard energy estimates, for times $t \in [n,n+1]$, the solution $B(t)$ of \eqref{passive-vector} is bounded above by $\|B(t)\|_{L^2} \leq C \|B(n)\|\e^{\|u\|_{L^\infty_t W^{1,\infty}_x}}$. Therefore, if $\overline{\gamma}>\log r(S)$, there exist constants $\zeta>r(S), C_1>0$, a sequence $t_{n} \to \infty$, and an initial condition $B_0$ so that the solution to \eqref{passive-vector} satisfies $$\|B(t_n)\|_{L^2}\geq C_1\zeta^{t_n}$$ for all $n$. But $t_n=N_n+s_n$, $N_n \in \mathbb{N}$, $s_n \in [0,1]$, so we can bound 
% \begin{equation*}
% \|B(t_n)\|_{L^2}=\|U(t_n,N_n) U(N_n,0)B\|_{L^2} \leq C\e^{\|u\|_{L^\infty_t W^{1,\infty}_x}}\|S^{N_n}\| \|B_0\|_{L^2}.
% \end{equation*}
% Hence, combining the two inequalities, we observe
% $$
% C_1^{1/N_{n}} \zeta \leq C^{1/{N_n}}\e^{N_n^{-1}\|u\|_{L^\infty_t W^{1,\infty}_x}} \|S^{N_n}\|^{1/N_n}\|B_0\|_{L^2}^{1/N_n}.
% $$
% Taking $\limsup_{N_n \to \infty}$, we observe 
% \begin{equation*}
% \zeta \leq r(S),
% \end{equation*}
% which is a contradiction.
Noting that this upper bound also holds in the case 
$r(T)=0$, we conclude the proof.
%Therefore, we conclude the proof.
%when $r(T)=0$ we obtain 
% $$
% -\infty\leq {\gamma}\leq \overline{\gamma} =\limsup_{t \to \infty} \frac{1}{t}\|B(t)\|_{L^2}=-\infty \,.
% $$
\end{proof}

\section{Preliminaries on rescaled ABC flows} \label{sec:ABC}
In this section, we recall some spectral properties of ABC flows that have been studied in \cite{CZSV25} and prove a few more properties we need for proving Theorem \ref{thm:main} about ABC flows with $a=0$ and $b=c=1$. We look for solutions in the modal form
$$B(t, \xx) = \exp (i \jj \cdot \xx + pt ) H(\xx) $$
of the kinematic dynamo equation \eqref{passive-vector} with $\eps =1$. The goal is to prove existence of a modal form solution with $\Re (p) >0$. We observe that $B$ is a solution to \eqref{passive-vector} with $\eps =1$ if and only if $H$ solves the eigenvalue problem
$$\cL(\jj) H = p H \qquad \text{and} \qquad (\nabla + i \jj) \cdot H =0 \,, $$
where
$$\cL (\jj) = (\nabla +i\jj)\times (  u\times H)+(\nabla +i\jj)^2H \,.$$
For convenience, we expand $\cL (\jj)$ in $\jj$ as 
$$ \cL(\jj) = \cL_0 + |\jj| \cL_1  - |\jj|^2 \Id $$
where 
\begin{align}
    \cL_0 & = \nabla \times ( u \times H) + \Delta H
    \\
    \cL_1 & = i \frac{\jj}{|\jj|} \times ( u \times H) + 2 i \frac{\jj}{|\jj|} \cdot \nabla H \,.
\end{align}

We recall that for any $u$ such that  $\| u \|_{W^{1,\infty}} < 1$ we have that $\Ker (\cL_0)$ is three dimensional, see \cite[Lemma 2.1]{CZSV25}. We also recall the following result from  \cite[Lemma 2.2]{CZSV25}.

\begin{lemma} \label{lemma:expansion-eigen}
There exist $\delta, \delta_1 >0$ small enough such that the following holds true. For any $|\jj| < \delta_1$ and $u \in W^{1, \infty} (\TT^3)$ with zero average and $\| u \|_{W^{1, \infty}}< \delta$, $\cL (\jj)$ admits eigenvalues of the form
\begin{equation}
p_\ell(\jj)=|\jj|\mu_\ell+o(|\jj|) \in \CC \,, \qquad \ell=1,2,3 \,,
\end{equation}
where $\mu_\ell$ are the (possibly repeated) eigenvalues of the operator 
\begin{equation}
P\mathcal{L}_1 P:PL^2(\mathbb{T}^3) \to PL^2(\mathbb{T}^3)\,,
\end{equation}
where $P$ is the Riesz projector, associated to the eigenvalue $0$, meaning that it projects onto $\Ker (\cL_0)$ which is a three dimensional space and $P L^2 (\TT^3) = Ker (\cL_0) \cong \CC^3$.
\end{lemma}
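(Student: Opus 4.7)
My plan is to read the operator family $\cL(\jj) = \cL_0 + |\jj|\,\cL_1 - |\jj|^2 \Id$ as the one-parameter family $s \mapsto T(s) := \cL_0 + s\cL_1$ evaluated at $s = |\jj|$, plus a trivial scalar shift by $-|\jj|^2$, and then to apply Lemma~\ref{lemma:kato-convergence} with $T_0 = \cL_0$ and $T_1 = \cL_1$. Since subtracting $|\jj|^2 \Id$ merely shifts every eigenvalue by $-|\jj|^2 = o(|\jj|)$, this manoeuvre reduces the statement to the expansion produced by the Kato-type lemma already established above.

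To apply that lemma I first need to verify its hypotheses. The fact that $0$ is an isolated eigenvalue of $\cL_0$ with a three-dimensional kernel, for $\|u\|_{W^{1,\infty}} < \delta$ with $\delta$ sufficiently small, is the cited \cite[Lemma~2.1]{CZSV25}; semisimplicity of this eigenvalue (required by Lemma~\ref{lemma:kato-convergence}) follows by continuity from the unperturbed case $u \equiv 0$, where $\cL_0 = \Delta$ and both the kernel and the total generalised eigenspace at $0$ coincide with the space of constant vector fields on $\TT^3$, which is three-dimensional. For the relative boundedness of $\cL_1$ with respect to $\cL_0$, I note that $\cL_1 H = i\hat \jj \times (u \times H) + 2 i \hat \jj \cdot \nabla H$ is a first-order differential operator with $L^\infty$-coefficients controlled by $\|u\|_{W^{1,\infty}}$, while $\cL_0$ differs from $\Delta$ by a bounded first-order perturbation of size $O(\delta)$; the standard interpolation inequality $\|\nabla H\|_{L^2} \le \eta \|\Delta H\|_{L^2} + C(\eta)\|H\|_{L^2}$ on $\TT^3$ therefore yields $\|\cL_1 H\|_{L^2} \le \eta \|\cL_0 H\|_{L^2} + C(\eta,\delta)\|H\|_{L^2}$ for any $\eta>0$, provided $\delta$ is chosen small enough.

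Once the hypotheses are in place, Lemma~\ref{lemma:kato-convergence} delivers eigenvalues $\tilde p_\ell(|\jj|) = |\jj|\mu_\ell + o(|\jj|)$ for $T(|\jj|)$, with $\mu_\ell$ the eigenvalues of the finite-dimensional operator $P\cL_1 P$ acting on $\Ker(\cL_0) \cong \CC^3$, and shifting back by $-|\jj|^2$ yields the claimed asymptotic expansion for $\cL(\jj)$. The main obstacle I anticipate is the clause in Lemma~\ref{lemma:kato-convergence} requiring the $\mu_\ell$ to be \emph{distinct}, whereas the present statement explicitly allows repetitions. I would bypass this by using that the total Riesz projector $P(|\jj|)$ onto the three bifurcating eigenvalues remains three-dimensional and analytic in $|\jj|$ near zero (by standard Kato reduction, cf.~\cite{K76}), so that the compressed operator $P(|\jj|)\cL(\jj)P(|\jj|)$ is a finite-dimensional analytic matrix family; the roots of its characteristic polynomial depend continuously on $|\jj|$ and converge, counted with algebraic multiplicities, to those of $P\cL_1 P$ as $|\jj| \to 0$, which gives the stated expansion even when the $\mu_\ell$ coincide. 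The remaining step — choosing $\delta_1 \ll 1$ so that the three perturbed eigenvalues stay inside a small disc around $0$ isolated from the rest of $\sigma(\cL_0)$ — is routine bookkeeping.
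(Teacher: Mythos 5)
The paper does not prove this lemma; it is quoted as \cite[Lemma~2.2]{CZSV25}, so there is no in-paper proof to compare against directly. Your reconstruction via the paper's abstract perturbation result Lemma~\ref{lemma:kato-convergence} is nonetheless correct and aligns with how that abstract lemma is itself proved in the Appendix (the Kato reduction via the transformation operator $V(j)$, yielding $V(j)^{-1}T(j)P(j)V(j)P = pP + jPT_1P + o(j)$). You rightly flag the mismatch between the distinctness hypothesis of Lemma~\ref{lemma:kato-convergence} and the ``possibly repeated'' $\mu_\ell$ allowed in Lemma~\ref{lemma:expansion-eigen}; your workaround through the analytic total projector $P(|\jj|)$ and continuity-with-multiplicity of the roots of the $3\times 3$ characteristic polynomial is exactly right, since the distinctness hypothesis in Lemma~\ref{lemma:kato-convergence} is used only for the convergence of the individual rank-one projectors $P_i(j)$, not for the eigenvalue expansion, which is all that is invoked here. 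Your hypothesis checks --- semisimplicity of the eigenvalue $0$ by matching the (continuously $3$) rank of the Riesz projector with $\dim\Ker(\cL_0)=3$ from \cite[Lemma~2.1]{CZSV25}; $\cL_0$-relative boundedness of $\cL_1$ by interpolation plus absorption of the $O(\delta)$ first-order part of $\cL_0$ for $\delta$ small; and treating the $-|\jj|^2\Id$ term as a scalar $o(|\jj|)$ shift --- are all sound.
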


From now on for convenience of the notation we assume that $\delta \in (0,1)$ and
$$ u = \delta U \,, \quad \text{with } \quad  \| U \|_{W^\infty} \leq 1\,.$$
Under this assumption on $U$ we have that the operator $\Delta + \nabla \times (\delta U \times \cdot ) = \Delta (\Id + \Delta^{-1} \nabla \times (\delta U \times \cdot ))$ is invertible. 
For any $H \in L^2 (\TT^3)$ we  denote the average of the vector field as
$$ \langle H \rangle = \fint_{\TT^3} H $$
and we denote by $L^2_0 (\TT^3)$ the space of $L^2$ vector field with zero average. Then,  we define
$$\cS : \CC^3 \to L^2_0 (\TT^3)$$
so that for any $\cS(v) $ is the unique zero average solution to 
\begin{equation} 
    \nabla \times (\delta U \times \cS(v)) + \Delta \cS(v) = \nabla \times (   v \times \delta U)\,.
\end{equation}
By Neumann series expansion using that $\Delta + \nabla \times (\delta U \times \cdot ) = \Delta (\Id + \delta \Delta^{-1} \nabla \times ( U \times \cdot ))$ we have that $S(v)$ satisfies
\begin{equation} \label{eq:expansion-S}
    \| \cS(v) - \delta \Delta^{-1} (\nabla \times  (v \times U) ) \|_{L^2} \leq C \delta^2 |v| \,, \qquad \| \cS (v) \|_{L^2} \leq C \delta |v| \,.
\end{equation}
with a constant $C>0$ depending only on $\|U \|_{W^\infty}\leq 1$.
Therefore, from this and \cite[Lemma 2.3]{CZSV25} we deduce the following result.
\begin{lemma}  \label{lemma:projector}
Let $U \in W^{1, \infty} (\TT^3)$ with $\| U \|_{W^{1, \infty}} \leq 1$ and $\delta \in (0,1)$. Then, 
The Riesz projector $P$ acts on $L^2(\mathbb{T}^3)$ via 
\begin{equation}
PH=\langle H \rangle +\cS(\langle H\rangle),
\end{equation}
In particular, the Riesz projector $P$ admits the asymptotic expansion for any $\delta \in (0,1)$, meaning that there exists a universal constant $C>0$ such that  
$$ \| P H - \langle H \rangle -\delta\Delta^{-1} \nabla \times (\langle H\rangle \times U  )\|_{L^2} \leq C \delta^2 |\langle H \rangle |  \,. $$
\end{lemma}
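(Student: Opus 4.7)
The plan is to prove the lemma in two short steps, mostly by reducing to \cite[Lemma 2.3]{CZSV25} and the Neumann bound \eqref{eq:expansion-S}.

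First, I would verify the identity $PH = \langle H\rangle + \cS(\langle H\rangle)$, which is essentially \cite[Lemma 2.3]{CZSV25} transcribed into our notation. The key observation is that for every constant vector $v \in \CC^3$ the field $v + \cS(v)$ lies in $\Ker(\cL_0)$: applying $\cL_0$ and using $\Delta v = 0$ together with the defining equation of $\cS(v)$ and the identity $\nabla\times(v\times\delta U) = -\nabla\times(\delta U\times v)$ yields
\[
\cL_0(v+\cS(v)) = \nabla\times(\delta U\times v) + \nabla\times(\delta U\times \cS(v)) + \Delta\cS(v) = 0.
\]
Since the three correctors $e_i + \cS(e_i)$ have averages $e_i$ they are linearly independent, and as $\dim\Ker(\cL_0) = 3$ by \cite[Lemma 2.1]{CZSV25} they form a basis of the kernel. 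The map $Q\colon H\mapsto \langle H\rangle + \cS(\langle H\rangle)$ is therefore idempotent (because $\cS$ has zero average), its range is $\Ker(\cL_0)$, and its kernel is the zero--average subspace $L^2_0(\TT^3)$.

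Second, to identify $Q$ with the Riesz projector $P$ it suffices to note that $L^2_0(\TT^3)$ is the complementary spectral subspace at the eigenvalue $0$. Indeed, $\cL_0$ preserves $L^2_0$, and on $L^2_0$ it factors as $\Delta(\Id + \delta\Delta^{-1}\nabla\times(U\times\cdot))$; by Neumann expansion (using $\delta<1$ and $\|U\|_{W^{1,\infty}}\leq 1$) this is invertible, so $0$ is isolated and semisimple and $L^2(\TT^3) = \Ker(\cL_0)\oplus L^2_0(\TT^3)$ is the spectral decomposition. Thus $Q=P$.

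Third, the quantitative expansion is then an immediate consequence of the first inequality in \eqref{eq:expansion-S} applied with $v = \langle H\rangle$:
\[
\| PH - \langle H\rangle - \delta\Delta^{-1}\nabla\times(\langle H\rangle\times U)\|_{L^2} = \|\cS(\langle H\rangle) - \delta\Delta^{-1}\nabla\times(\langle H\rangle\times U)\|_{L^2} \leq C\delta^2 |\langle H\rangle|,
\]
with $C$ depending only on $\|U\|_{W^{1,\infty}}\leq 1$. The only mildly delicate point in the whole argument is the spectral identification $Q=P$ in the first step, but this is already handled in \cite[Lemma 2.3]{CZSV25}; all the genuinely new content of this lemma, namely the $O(\delta^2)$ remainder, comes directly from the Neumann expansion of $\cS$ encoded in \eqref{eq:expansion-S}.
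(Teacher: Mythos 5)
Your proposal is correct and follows essentially the same route as the paper, which simply cites \cite[Lemma~2.3]{CZSV25} for the projector identity $PH=\langle H\rangle+\cS(\langle H\rangle)$ and then applies the Neumann bound \eqref{eq:expansion-S} with $v=\langle H\rangle$. The only difference is that you unwind the cited lemma into an explicit verification (showing $e_i+\cS(e_i)$ span $\Ker(\cL_0)$, that $\cL_0$ is invertible on $L^2_0$, and that the resulting $\cL_0$-invariant splitting $L^2=\Ker(\cL_0)\oplus L^2_0$ identifies $Q$ with the Riesz projector), which is a correct and welcome expansion of the same argument.
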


We now fix the ABC flow with $a=0$ and $b=c=1$ that is independent on $z$. More precisely, we define
\begin{align}
    U(x,y)=\begin{pmatrix} \label{d:ABC-fixed}
\cos(y)\\
\sin(x)\\
\sin(y)+\cos(x) \end{pmatrix}\,. 
\end{align}
For such velocity field it can be computed explicitly the matrix $M_0 \in \CC^{3 \times 3}$ defined by 
$$ M_0(v) = i \frac{\jj}{|\jj|} \times  \fint ( \delta U \times (\delta \Delta^{-1} \nabla \times (v \times U)) ) \,, \qquad \forall v \in \CC^3\,, $$
and fixing $\jj = (0,0,j_3)$ and using Fourier series (thanks to the fact that $U$ is supported on modes $|k|=1$), we can compute explicitly 
$$M_0 =i \delta^2 
\begin{pmatrix} 
0&-  1&  0\\ 
1 & 0& 0   \\ 
0 &0 & 0 
\end{pmatrix}
$$
whose eigenvalues are  $\pm \delta^2$ and $0$.
We now compute the eigenvalues $\mu_\ell$ given in Lemma \ref{lemma:expansion-eigen} of $P \cL_1 P$, where $P$ is the Riesz projector onto $\Ker(\cL_0)$.
By Lemma \ref{lemma:projector} we have 
$P\cL_1 P H = \langle \cL_1 P H \rangle + \cS (\langle \cL_1 P H \rangle)  $.
Using also that $U$ is zero average we deduce that $\langle \cL_1 P H\rangle=\frac{i \jj}{|\jj|}\times \langle \delta U \times \cS(\langle H \rangle)\rangle  $
and by the expansion of $\cS (v)$ given by \eqref{eq:expansion-S} we conclude  
\begin{align} \label{eq:bound-M-0}
    \|M_0 (\langle H \rangle )  - P \cL_1 P H \|_{L^2} \leq  \|M_0 (\langle H \rangle )  - \langle P \cL_1 P H \rangle \|_{L^2} + \| \cS (\langle P \cL_1 P H \rangle)\|_{L^2}   \leq C \delta^3 \,.
\end{align}
Therefore, since the operator $M_0$ has a positive eigenvalue $\delta^2$ we also deduce that $P\cL_1 P$  has an eigenvalue $\mu_1$ such that $\Re (\mu_1) > \delta^2/2 $ for $\delta >0 $ sufficiently small.

% If $P \cL_1 P H = \mu H$ then taking the average we have
% $ \langle P \cL_1 P H \rangle = \mu \langle H \rangle$. By Lemma \ref{lemma:projector} we deduce
% $$ \langle \cL_1 P H \rangle = \mu \langle H \rangle \,.$$

% %$$|M_0 (\langle H \rangle ) - \langle P \cL_1 P H \rangle| \leq C \delta^3 \,.$$
% Hence, the eigenvalues $\mu_\ell $ for $\ell=1,2,3$ in Lemma \ref{lemma:expansion-eigen} are the same eigenvalues of $L$ up to an error of size $\mathcal{O} (\delta^3)$, in particular for $\delta $ sufficiently small there exists an eigenvalue $\mu_\ell$ which we denote with $\ell=1$ such that $\mu_1 = \delta^2 + \mathcal{O}(\delta^3) \geq \frac{\delta^2 }{2}>0$.

\begin{lemma} \label{lemma:riesz-P-1}
Let $P^{(1)}$ be the Riesz projector of $P \cL_1 P$ onto the eigenvalue $\mu_1$ such that $\Re (\mu_1) > \delta^2/2$. Then, for any $H \in L^2 (\TT^3) $, there exists $\mathcal{O} (\delta) H \in L^2 (\TT^3)$ with $\|\mathcal{O}(\delta)H\|_{L^2(\mathbb{T}^3)} \leq C \delta |\langle H \rangle|$ such that the following holds true 
\begin{equation}
P^{(1)} H=\blue{-}\begin{pmatrix}
\frac{1}{2} & -\frac{i}{2} & 0\\
\frac{i}{2} & \frac{1}{2} & 0 \\
0 & 0 & 0
\end{pmatrix}\langle H\rangle +\mathcal{O}(\delta)H \,.
\end{equation}
\end{lemma}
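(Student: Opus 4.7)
The plan is to identify $P\cL_1 P$ with a $3\times 3$ matrix on $\CC^3$ via the isomorphism $v \mapsto v + \cS(v)$ from Lemma \ref{lemma:projector}, compute the Riesz projector of the leading matrix $M_0$ by hand, and then transfer to the perturbed matrix via standard finite-dimensional perturbation theory.

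First, Lemma \ref{lemma:projector} together with \eqref{eq:bound-M-0} yields a representation of the form
\begin{equation*}
P\cL_1 P\, H \;=\; M(\langle H\rangle) \;+\; \cS\!\bigl(M(\langle H\rangle)\bigr),
\end{equation*}
for a linear map $M:\CC^3 \to \CC^3$ with $\|M - M_0\|\leq C\delta^3$, where the second summand is of size $O(\delta)|\langle H\rangle|$ by \eqref{eq:expansion-S}. Thus $P\cL_1 P$ factors through the spatial average, and any Riesz projector of $P\cL_1 P$ onto an eigenvalue inside $PL^2(\TT^3)$ inherits the same structure:
\begin{equation*}
P^{(1)} H \;=\; P^{(1)}_M \langle H\rangle \;+\; \cS\!\bigl(P^{(1)}_M \langle H\rangle\bigr),
\end{equation*}
where $P^{(1)}_M$ denotes the $3\times 3$ Riesz projector of $M$ onto $\mu_1$.

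Next, a direct calculation shows that $M_0$ is Hermitian with simple spectrum $\{-\delta^2, 0, \delta^2\}$, and the unit eigenvector for $+\delta^2$ is $v_+ = \tfrac{1}{\sqrt{2}}(-i, 1, 0)^T$; hence $P^{(1)}_{M_0} = v_+ v_+^*$ coincides (up to the sign present in the statement) with the matrix in the claim. To pass from $M_0$ to $M$, we apply the Riesz formula
\begin{equation*}
P^{(1)}_M \;=\; \frac{1}{2\pi i}\oint_{\gamma}(z - M)^{-1}\,\dd z
\end{equation*}
on a contour $\gamma$ of radius $c\delta^2$ around $\delta^2$, with $c$ small enough that $\gamma$ lies at distance $\geq c\delta^2$ from the remaining two eigenvalues of $M_0$. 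Expanding $(z - M)^{-1}$ in a Neumann series in $M - M_0$ around $(z-M_0)^{-1}$, the zeroth-order contribution reproduces $P^{(1)}_{M_0}$ and the first-order correction is controlled by
\begin{equation*}
\|M - M_0\|\cdot\|(z-M_0)^{-1}\|^2 \cdot |\gamma|
\;\leq\; C\delta^3 \cdot \delta^{-4}\cdot \delta^2 \;=\; C\delta,
\end{equation*}
so $P^{(1)}_M = P^{(1)}_{M_0} + O(\delta)$.

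Combining the two displays above and invoking $\|\cS(w)\|_{L^2}\leq C\delta|w|$, the remainders $(P^{(1)}_M - P^{(1)}_{M_0})\langle H\rangle$ and $\cS(P^{(1)}_M\langle H\rangle)$ are each bounded in $L^2$ by $C\delta|\langle H\rangle|$, yielding precisely the $\mathcal O(\delta)H$ term of the lemma. The main subtle point is the order-counting in the perturbation argument: since $M - M_0 = O(\delta^3)$ is smaller than $M_0 = O(\delta^2)$ only by a factor $\delta$, the resolvent must be expanded on a contour of radius $\sim\delta^2$, which forces the error in the projector to be $O(\delta)$ rather than the naive $O(\delta^3)$. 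Everything else is either direct matrix algebra or a routine application of results already collected in Sections \ref{sec:sectorial} and \ref{sec:ABC}.
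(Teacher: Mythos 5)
Your proposal follows essentially the same route as the paper: identify $P\cL_1 P$ with a $3\times 3$ matrix $M$ acting through the spatial average via the isomorphism $v \mapsto v + \cS(v)$, reduce $M$ to $M_0$ by a Neumann series on a contour of radius $\sim\delta^2$ (so that $\|(z-M_0)^{-1}(M-M_0)\|\lesssim \delta^{-2}\cdot\delta^{3}=\delta$), and compute the leading-order projector explicitly. The one place you diverge is that last computation: the paper inverts $(M_0-\lambda)^{-1}$ by hand and integrates around the pole, whereas you observe that $M_0$ is Hermitian with simple spectrum $\{\pm\delta^2,0\}$ and unit eigenvector $v_+=\tfrac{1}{\sqrt2}(-i,1,0)^T$ for $+\delta^2$, so that $P^{(1)}_{M_0}=v_+v_+^*$ at once. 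That is cleaner and, importantly, it makes visible a point you only flag in passing: a Riesz projector must be idempotent, and $v_+v_+^*$ is the unique rank-one projection onto $\mathrm{span}(v_+)$, so the overall sign in the statement is forced to be $+$. Indeed, with the convention of \eqref{eq:P abstract} the projector is $\frac{1}{2\pi i}\oint(\lambda-T)^{-1}\dd\lambda=-\frac{1}{2\pi i}\oint(T-\lambda)^{-1}\dd\lambda$; a direct residue computation on the paper's contour gives $\frac{1}{2\pi i}\oint(M_0-\lambda)^{-1}\dd\lambda=-v_+v_+^*$, hence $P^{(1)}_{M_0}=+v_+v_+^*$, so the highlighted minus sign in the statement looks like an overcorrection and should be dropped (this is immaterial downstream, since Lemma~\ref{lemma:initial growth} only uses $P^{(1)}H_{\mathrm{in}}\neq 0$). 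Two small completeness remarks: you assert, rather than verify, that the Riesz projector of $P\cL_1 P$ transports to that of $M$ under $v\mapsto v+\cS(v)$; the paper's proof actually checks the resolvent identity $(P\cL_1 P-\lambda)^{-1}H=(M-\lambda)^{-1}\langle H\rangle+\cS((M-\lambda)^{-1}\langle H\rangle)$ for $H\in PL^2$, which is the one step you should spell out. And the bound $\|M-M_0\|\leq C\delta^3$ is extracted most directly from \eqref{eq:expansion-S}, since $M$ and $M_0$ differ precisely by replacing $\cS(v)$ with its first Neumann term; your order counting is otherwise exactly the paper's.
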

\begin{proof}
    We consider $\gamma : [0,1] \to \CC$ to be a contour of the eigenvalue $\mu_1$, which contains only the eigenvalue $\mu_1$ and such that  $\Gamma = \{ \lambda \in \CC : \exists t \in [0,1]: \gamma(t) =\lambda\} \subset \rho (P \cL_1 P)$. To do so one can just consider a circle of radius $\frac{\delta^2}{2}$ around the eigenvalue the point $\delta^2$, since $\mu_1$ is in the discrete spectrum.
We now use the short hand notation for the matrix $M \in \CC^{3 \times 3}$ defined as
$$M (v) = \frac{i \jj}{|\jj|}\times \langle \delta U \times \cS( v )\rangle \,, \qquad \forall v \in \CC^3 \,. $$
A direct computation shows that for any $H \in PL^2(\TT^3)$, i.e. by Lemma \ref{lemma:projector} $H= \langle H \rangle + \cS (\langle H\rangle)$, and any $\lambda \in \Gamma$ we have $(P \cL_1 P - \lambda)^{-1} H = (M-\lambda)^{-1}\langle H\rangle+\cS((M-\lambda)^{-1}\langle H\rangle)$. Indeed, by observing that 
$$P\cL_1 P F = \langle \cL_1 PF\rangle + \cS (\langle \cL_1 PF\rangle) = M (\langle F \rangle) + \cS (M (\langle F \rangle)) \,, \qquad \forall F \in L^2$$
if we set $F= (M-\lambda)^{-1}\langle H\rangle+S((M-\lambda)^{-1}\langle H\rangle)$,
we deduce
\begin{align}
(P\cL_1P-\lambda) F & =M\langle F \rangle +\cS(M\langle F\rangle)-\lambda \langle F \rangle -\lambda \cS(\langle F \rangle )\\
& =(M-\lambda)\langle F \rangle +\cS((M-\lambda)\langle F\rangle)=\langle H\rangle +\cS(\langle H\rangle)\,.
\end{align}

 Then, we have 
\begin{equation}
\frac{1}{2 \pi i}\int_{\Gamma} (P\cL_1P-\lambda)^{-1} H \dd \gamma=\frac{1}{2 \pi i}\int_{\Gamma}(M-\lambda)^{-1}\langle H \rangle \dd \gamma +\cS\left (\frac{1}{2 \pi i}\int_{\Gamma}(M-\lambda)^{-1}\langle H \rangle \dd \gamma \right)
\end{equation}
and using \eqref{eq:expansion-S} we just have to consider 
$$\frac{1}{2 \pi i}\int_{\Gamma}(M-\lambda)^{-1}\langle H \rangle \dd \lambda$$
up to an error of size $\mathcal{O} (\delta) H$.
Thus we compute
\begin{align}
&(M-\lambda)^{-1}= (M_0 - \lambda + (M-M_0))^{-1}=  (\Id + (M_0 - \lambda)^{-1}(M - M_0) )^{-1} (M_0 - \lambda)^{-1}
\end{align}
and since $\|  (M_0 - \lambda)^{-1}(M - M_0)  \|_{L^2 \to L^2 } \leq C \delta^{-2} \delta^3 = C \delta$ we reduce the computation to 
$$ \frac{1}{2 \pi i}\int_{\Gamma}(M_0-\lambda)^{-1}\langle H \rangle \dd \lambda \,,$$
up to errors of size $\mathcal{O} (\delta) H$.
Parametrizing the contour as $\gamma = \delta^2+\frac{1}{2}\delta^2 \eta$, where $\eta$ traces out a circle of radius $1$ in the complex plane denoted by $\Gamma_1$ by changing variable we deduce
\begin{equation}
\frac{1}{2 \pi i}\int_{\Gamma}(M_0-\lambda)^{-1} \dd \gamma = \frac{1}{2\pi i }\int_{\Gamma_1} (M_0-(1+\frac{1}{2}\eta))^{-1} \dd \eta \,.
\end{equation}

By an explicit computation $(M_0-(1+\frac{1}{2}{\eta}))^{-1}$ is given by 
\begin{equation}
(M_0-(1+\frac{1}{2}\eta))^{-1}=\begin{pmatrix}
\frac{1+\frac{1}{2}\eta}{1-(1+\frac{1}{2}\eta)^2} & \frac{i}{(1+\frac{1}{2}\eta)^2-1} & 0 \\
\frac{-i}{(1+\frac{1}{2}\eta)^2-1} & \frac{1+\frac{1}{2}\eta}{1-(1+\frac{1}{2}\eta)^2}  & 0\\
0 & 0 & -\frac{1}{(1+\frac{1}{2}\eta)}
\end{pmatrix},
\end{equation}
so integrating this around a circle $\Gamma_1$ and using Cauchy integral formula yields 
\begin{equation} 
\frac{1}{2\pi i }\int_\Gamma (M_0-(1+\frac{1}{2}\eta))^{-1} \dd \eta  =
\begin{pmatrix}
\frac{1}{2} & -\frac{i}{2} & 0\\
\frac{i}{2} & \frac{1}{2} & 0 \\
0 & 0 & 0
\end{pmatrix} 
\end{equation}
concluding the proof.
\end{proof}

\begin{lemma}
\label{lemma:initial growth}
Let $B_{\initial} (x,y,z) = \e^{i j z} (-i,1,0)$, then there exist $\delta, j \in (0,1)$ sufficiently small such that the following holds true. There exists $\lambda >0$ such that the solution $B$ to \eqref{passive-vector}, with $\eps =1$ and velocity field $u = \delta U$ with $U$ the ABC flow with $a=0$ and $b=c=1$ defined in \eqref{d:ABC-fixed}, satisfies
$$ \| B(t) \|_{L^2} \geq \e^{\lambda t} \| B_{\initial} \|_{L^2} \qquad \forall t \geq 0 \,.$$
\end{lemma}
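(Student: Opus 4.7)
The strategy is to use a Bloch-type decomposition to reduce the problem to a spectral analysis of $\cL(\jj)$ on $\TT^3$, and then show that the prescribed initial datum excites the dominant unstable eigenmode identified in Section \ref{sec:ABC}.

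\emph{Step 1 (Bloch reduction).} Setting $\jj=(0,0,j)$ and writing $B(t,x)=e^{i\jj\cdot x}\tilde B(t,x)$, the periodic factor $\tilde B$ solves $\partial_t \tilde B=\cL(\jj)\tilde B$ on $\TT^3$ with constant initial condition $\tilde B(0)=(-i,1,0)$. Since $|B(t,\cdot)|=|\tilde B(t,\cdot)|$ pointwise, we have $\|B(t)\|_{L^2(\TT^3)}=\|\tilde B(t)\|_{L^2(\TT^3)}$ for every $t$, and the lemma reduces to a lower bound on the periodic semigroup $e^{\cL(\jj)t}\tilde B(0)$.

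\emph{Step 2 (Positive eigenvalue of $\cL(\jj)$).} Combining Lemma \ref{lemma:expansion-eigen} with the explicit form of the matrix $M_0$ computed in Section \ref{sec:ABC}---whose eigenvalues are $\pm\delta^2,0$---and the $\mathcal{O}(\delta^3)$ bound \eqref{eq:bound-M-0}, the operator $P\cL_1 P$ admits an eigenvalue $\mu_1$ with $\Re(\mu_1)>\delta^2/2$. Hence, for $\delta,j$ small enough, $\cL(\jj)$ has an eigenvalue $p_1(\jj)=|\jj|\mu_1+o(|\jj|)$ with $\Re(p_1)\geq j\delta^2/3>0$.

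\emph{Step 3 (Alignment of the initial datum with the growing eigenmode).} The crucial algebraic observation is that $(-i,1,0)$ is \emph{exactly} the eigenvector of $M_0$ for the positive eigenvalue $\delta^2$. A direct application of Lemma \ref{lemma:riesz-P-1} with $\langle \tilde B(0)\rangle=(-i,1,0)$ gives
\begin{equation*}
P^{(1)}\tilde B(0) = -\tilde B(0) + \mathcal{O}(\delta),
\end{equation*}
which is nonzero for $\delta$ small. Using Lemma \ref{lemma:projector} to identify $P\tilde B(0)=\tilde B(0)+\mathcal{O}(\delta)$ and Lemma \ref{lemma:kato-convergence} to pass from the model projector $P^{(1)}P$ to the full Riesz projector $P_1(\jj)$ of $\cL(\jj)$ onto the eigenspace of $p_1(\jj)$, we deduce $P_1(\jj)\tilde B(0)\neq 0$ for $j$ sufficiently small.

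\emph{Step 4 (Sectorial semigroup and conclusion).} The operator $\cL(\jj)$ on $L^2(\TT^3)$ is sectorial with compact resolvent---its principal part is the negative Laplacian and the remaining first- and zeroth-order contributions are relatively bounded with arbitrarily small relative bound. Applying Lemma \ref{lemma:sectorial1} yields
\begin{equation*}
\liminf_{t\to\infty}\tfrac{1}{t}\log\|\tilde B(t)\|_{L^2}\geq \Re(p_1(\jj))-\zeta/2>0
\end{equation*}
for $\zeta>0$ chosen sufficiently small. This gives exponential growth for all $t$ beyond some threshold $T_0$. On the compact interval $[0,T_0]$, a standard energy estimate yields $\|\tilde B(t)\|_{L^2}\geq e^{-C\delta t}\|\tilde B(0)\|_{L^2}$, and choosing $\lambda\in(0,\Re(p_1)-\zeta/2)$ strictly smaller than the asymptotic rate while absorbing the transient behaviour into the exponent then produces the stated pointwise bound for all $t\geq 0$.

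\emph{Main obstacle.} The principal difficulty is Step 3: verifying that the concrete initial datum $(-i,1,0)$ is not annihilated by the Riesz projector associated with the growing eigenvalue. This requires recognizing $(-i,1,0)$ as the dominant eigenvector of $M_0$ and carefully tracking two layers of perturbation theory---the $\mathcal{O}(\delta^3)$ perturbation from $M_0$ to $P\cL_1 P$ via \eqref{eq:bound-M-0}, followed by the $j\to 0$ perturbation from $P\cL_1 P$ to $\cL(\jj)$ via Lemma \ref{lemma:kato-convergence}---so that the alignment survives at the level of the full operator. Once this spectral alignment is in place, the sectorial semigroup machinery of Section \ref{sec:sectorial} delivers the exponential lower bound in a routine way.
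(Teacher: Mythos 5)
Your proposal follows the paper's proof step for step: Bloch reduction to the periodic operator $\cL(\jj)$, identification of the unstable eigenvalue of $P\cL_1P$ via the explicit matrix $M_0$, verification that $P^{(1)}PH_{\initial}\neq 0$ through Lemma~\ref{lemma:riesz-P-1}, passage to $P_{\jj}H_{\initial}\neq 0$ via Lemma~\ref{lemma:kato-convergence}, and conclusion via the sectorial-semigroup Lemma~\ref{lemma:sectorial1}. The spectral alignment argument in Step~3 is exactly the content of the paper's proof.

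One caveat concerns Step~4, where you try to upgrade the $\liminf$ conclusion of Lemma~\ref{lemma:sectorial1} to the stated pointwise bound with prefactor $1$ for \emph{all} $t\geq 0$. This cannot be literally achieved: with $\jj=(0,0,j)$ and $\tilde B(0)=(-i,1,0)$ constant, the mode-$1$ contributions of $\cL_0$ and $\cL_1$ pair to zero against $\tilde B(0)$, so
\begin{equation}
\frac{d}{dt}\big\|\tilde B(t)\big\|_{L^2}^2\Big|_{t=0}
 = 2\,\Re\bigl\langle \cL(\jj)\tilde B(0),\tilde B(0)\bigr\rangle
 = -2|\jj|^2\,\|\tilde B(0)\|_{L^2}^2 < 0,
\end{equation}
while $\frac{d}{dt}\e^{2\lambda t}\big|_{t=0}=2\lambda>0$. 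Hence for small $t>0$ the inequality fails with prefactor~$1$, and the lower energy bound $e^{-C\delta t}$ you invoke is not a standard estimate for a diffusive operator (the $-\|\nabla B\|^2$ dissipation has no lower bound in terms of $\|B\|^2$). The statement should carry a prefactor $c\in(0,1)$ — which is what Lemma~\ref{lemma:rescaled 1} in fact uses ($c=1/2$), and is all that the later arguments need. This is an imprecision in the lemma's statement that the paper's own proof also glosses over, so it does not reflect a defect in your overall approach.
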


\begin{proof}
Let $P$ be the Riesz projector of $\cL_0$ onto $\Ker(\cL_0)$ and $P^{(1)}$ be the Riesz projector of $P \cL_1 P$ onto the eigenvalue $\mu_1$ with $\Re (\mu_1) > \delta^2/2$ thanks to Lemma \ref{lemma:riesz-P-1} of the form 
\begin{equation}
P^{(1)} H=\blue{-}\begin{pmatrix}
\frac{1}{2} & -\frac{i}{2} & 0\\
\frac{i}{2} & \frac{1}{2} & 0 \\
0 & 0 & 0
\end{pmatrix}\langle H\rangle +\mathcal{O}(\delta)H \,, \qquad \forall H \in L^2 \,.
\end{equation}
We define $H_{\initial} = (-i,1,0)$ and we notice that $P H_{\initial} \neq 0$, since $H_{\initial} = \langle H_{\initial} \rangle \neq 0$. Then, $P^{(1)} H_{\initial} = P^{(1)} P H_{\initial}$ and 
$$ P^{(1)} P H_{\initial}=\blue{-}\begin{pmatrix}
\frac{1}{2} & -\frac{i}{2} & 0\\
\frac{i}{2} & \frac{1}{2} & 0 \\
0 & 0 & 0
\end{pmatrix}\begin{pmatrix}
-i\\
1\\
0
\end{pmatrix}  +\mathcal{O}(\delta)  = -\begin{pmatrix}
-i\\
1\\
0
\end{pmatrix} + \mathcal{O}(\delta)\,. $$
Therefore, for $\delta >0$ small enough, $P^{(1)} H_{\initial} \neq 0$. Hence, letting $P_{\jj}$ be the Riesz projection onto the eigenvalue $|\jj|(\delta^2+\mathcal{O}(\delta^3))$ of $\mathcal{L}(\jj)$\footnote{Here $\jj = (0,0,j)$.}, we deduce that for $|\jj|$ small enough it holds $P_{\jj} H_{\initial} \neq 0$ thanks to Lemma \ref{lemma:kato-convergence}. Finally, we note that $\mathcal{L}(j)$ generates an analytic semigroup, and has compact resolvent. Therefore, by Lemma \ref{lemma:sectorial1} we immediately deduce the claimed growth for $|\jj|$, $\delta$ small enough.
\end{proof}

\section{Main proposition}
In this section, we state and prove the main proposition concerning our building block velocity field, which asserts that the exponential growth of solutions persists under small perturbations of $\eps$ and of the initial data.
More precisely, we fix  $N_0 \in \NN$ and $\delta >0$ given in Proposition \ref{prop:main} and
for any $n \in \NN$ let $U^{(n)}  \in C^\infty (\TT^3)$ be the divergence-free velocity field
\begin{equation} \label{d:building-block}
U^{(n)} (\xx) =  \frac{\delta}{N_0 n} \begin{pmatrix}
\cos(n N_0 y)\\
\sin(n N_0 x)\\
\sin(n N_0 y)+\cos(n N_0 x)
\end{pmatrix}
\end{equation} 
which is a rescaling of a  particular  choice of an ABC flow.
We also denote by $\cS^\eps_n (t)$ the semigroup generated by  equation \eqref{passive-vector} with velocity field $U^{(n)}$. In particular, we denote by 
$$\cS^\eps_n (t) B_{\initial} : = B^\eps (t)$$ the solution to \eqref{passive-vector} under the action of the  velocity field $\bar{u}_n$, initial datum $B_{\initial}$ and magnetic diffusivity $\eps >0$.
Let $j_0 \in \NN$, then we slightly abuse the notation and we  denote by  
\begin{align} \label{d:initial-B}
    {B}_{\initial }^{(n)} (\xx) = \e^{i n  z} (-i,1,0) + (0,0,1)\,.
\end{align}
\begin{remark}
    We add the constant vector $(0,0,1)$ to the initial datum, unlike in Section~\ref{sec:ABC}. Although it is a stationary solution to \eqref{passive-vector} for the velocity field $U^{(n)}$ and thus irrelevant here, it will become essential in Section~\ref{sec:construction}. We include it now for convenience.
\end{remark}

%$B_{\initial} \in C^\infty(\TT^3; \RR^3)$ we slightly abuse the notation and for any $n \in \NN$ we denote by   $$ B_{\initial , n } (\xx) = B_{\initial} (n \xx ) \,.$$

\begin{proposition} \label{prop:main}
     There exist $\lambda >0 $, $\delta >0$, $N_0, \bar{n} \in \NN$ such that  the following holds true. For any $n \geq \bar{n}$ sufficiently large there exist $\eta_n, c_n  \in (0, 1/2)$ such that  the solution $B$ to \eqref{passive-vector} with velocity field $U^{(n)}$ defined in \eqref{d:building-block}, initial datum $B_{\initial}$ satisfying $\| B_{\initial} -  {B}_{\initial }^{(n)} \|_{L^2} < \eta_n  \|{B}_{\initial }^{(n)} \|_{L^2} $ and magnetic diffusivity $\eps \in [\frac{1}{ N_0^2 n^2} , \frac{1}{ N_0^2 (n-1)^2} ) $ satisfies 
     $$\| B (t)   \|_{L^2} \geq c_n \e^{\lambda t} \| B_{\initial} \|_{L^2} \qquad \forall t \geq 0 \,.$$
\end{proposition}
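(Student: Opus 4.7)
The plan is to reduce to the fixed-torus analysis of Section~\ref{sec:ABC} via the spatial rescaling $y = nN_0\,\xx$, $\tilde B(t,y) := B(t,y/(nN_0))$. Under this substitution, $\tilde B$ satisfies \eqref{passive-vector} on $[0,2\pi nN_0]^3$ with the \emph{fixed} velocity $\delta U$ (the ABC flow of \eqref{d:ABC-fixed}) and rescaled diffusivity $\kappa := \eps(nN_0)^2 \in [1,(n/(n-1))^2)$, which converges to $1$ as $n\to\infty$. Crucially, the rescaled initial state $\tilde B_{\initial}^{(n)}(y) = e^{iy_3/N_0}(-i,1,0) + (0,0,1)$ is independent of $n$, and the rescaling preserves $L^2$-norm ratios. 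Since $\delta U$ depends only on $(y_1,y_2)$ with integer Fourier wavenumbers, the evolution preserves the subspace $V \cong L^2(\tilde{\TT}^3)$ with $\tilde{\TT}^3 := \TT^2 \times [0,2\pi N_0]$, which contains $\tilde B_{\initial}^{(n)}$; decomposing orthogonally $\tilde B_{\initial} = \tilde B_{\initial}^V + \tilde B_{\initial}^{V^\perp}$, invariance of both summands combined with $L^2$-orthogonality yields $\|\tilde B(t)\|_{L^2}^2 \geq \|\tilde B^V(t)\|_{L^2}^2$, reducing the task to proving growth within $V$.

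On $V$ at $\kappa=1$, I apply Lemma~\ref{lemma:initial growth} with $j = 1/N_0$ (taking $N_0$ large enough to satisfy its smallness hypotheses) to obtain exponential growth at some rate $\lambda_0 > 0$ of the solution starting from $e^{iy_3/N_0}(-i,1,0)$. The constant vector $(0,0,1)$ is a stationary solution (since $\delta U$ is $y_3$-independent, $(0,0,1)\cdot\nabla(\delta U) = 0$, and the transport and diffusion terms trivially vanish) and is $L^2(\tilde{\TT}^3)$-orthogonal to $e^{iy_3/N_0}(-i,1,0)$. Including it therefore does not alter the asymptotic growth rate, so $\liminf_{t\to\infty} t^{-1}\log\|e^{T(1)t}\tilde B_{\initial}^{(n)}\|_{L^2} \geq \lambda_0$, where $T(\kappa) := \kappa\Delta + \nabla\times(\delta U\times\cdot)$.

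Next, Lemma~\ref{lemma:sectorial2}, applied with $T_0 = \Delta$, $T_1 = \nabla\times(\delta U\times\cdot)$, and $x_0 = \tilde B_{\initial}^{(n)}$, supplies $n$-independent constants $\kappa_0, \eta_0, C, T > 0$ such that $\|e^{T(\kappa)t}x\|_{L^2} \geq C e^{(\lambda_0/2)t}\|x\|_{L^2}$ for all $|1-\kappa|\leq\kappa_0$, $\|x-\tilde B_{\initial}^{(n)}\|_{L^2(\tilde{\TT}^3)}\leq\eta_0\|\tilde B_{\initial}^{(n)}\|_{L^2(\tilde{\TT}^3)}$, and $t\geq T$. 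Choose $\bar{n}$ so that $(n/(n-1))^2 - 1 \leq \kappa_0$ for $n \geq \bar{n}$ and set $\eta_n := \min(\eta_0/2, 1/3)$. For the short-time window $t \in [0,T]$, compactness of $[0,T] \times [1,1+\kappa_0]$ together with joint continuity of $(\kappa,t)\mapsto \|e^{T(\kappa)t}\tilde B_{\initial}^{(n)}\|_{L^2}$ gives a uniform positive minimum $m_0 > 0$; combining this with the uniform operator-norm bound on $e^{T(\kappa)t}$ (standard for perturbed sectorial operators) via a triangle inequality delivers $\|\tilde B(t)\|_{L^2} \geq c''\|\tilde B_{\initial}\|_{L^2}$ on $[0,T]$ for some $c''>0$ uniform over the admissible perturbations and diffusivities. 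Setting $\lambda := \lambda_0/2$ and $c_n := \min(C(1-\eta_n),\, c'' e^{-\lambda T},\, 1/3)$ and undoing the rescaling then gives the claimed bound $\|B(t)\|_{L^2} \geq c_n e^{\lambda t}\|B_{\initial}\|_{L^2}$ for all $t \geq 0$. The main obstacle is exactly this short-time estimate, which is not an output of Lemma~\ref{lemma:sectorial2} and must be secured by the compactness/continuity argument above; all the remaining spectral and perturbative ingredients are available from Sections~\ref{sec:sectorial} and~\ref{sec:ABC}.
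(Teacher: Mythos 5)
Your argument is correct and takes a genuinely different, and in some ways cleaner, route than the paper's. The paper first rescales the \emph{particular} modal solution of Lemma~\ref{lemma:rescaled 1} from the $n=1$, $\eps = 1/N_0^2$ setting to the $U^{(n)}$, $\eps/n^2$ setting; since rescaling a single solution says nothing about general perturbed initial data, it must then apply Lemma~\ref{lemma:sectorial2} a second time, pointwise for each diffusivity in the rescaled interval, and close via a finite subcover of that compact interval, which is what produces the $n$-dependent $\eta_n, c_n$. You instead rescale the \emph{operator} itself: the change of variables $y = nN_0\,\xx$ turns the problem into one for the fixed field $\delta U$ with diffusivity $\kappa \in [1,(n/(n-1))^2)$ on $[0,2\pi nN_0]^3$, and the key observation that makes this viable is that $\delta U$ has integer Fourier support in $(y_1,y_2)$ and is $y_3$-independent, so the lattice subspace $V \cong L^2(\TT^2\times[0,2\pi N_0])$ \emph{and} its orthogonal complement are both $T(\kappa)$-invariant, giving $\|\tilde B(t)\|^2 \ge \|\tilde B^V(t)\|^2$ honestly despite the non-self-adjointness. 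On $V$ the operator, the reference datum, and hence all constants from Lemma~\ref{lemma:sectorial2} become $n$-independent, which is strictly more than the proposition requires and dispenses with the finite-subcover step entirely. Both proofs still need to secure the short-time range $t\in[0,T]$, since Lemma~\ref{lemma:sectorial2} only gives $t\ge T$; you make this explicit via backward uniqueness plus compactness in $(\kappa,t)$, whereas the paper absorbs it implicitly into the choice of $c_\eps$. Two points to tidy up in your write-up: (i) you invoke Lemma~\ref{lemma:initial growth} ``on $V$,'' but it is stated on $\TT^3$ --- you should say that conjugating $T(1)|_V$ on the $e^{iy_3/N_0}$-modal sector by $e^{iy_3/N_0}$ produces precisely $\mathcal{L}(\jj)$ with $\jj=(0,0,1/N_0)$ acting on $z$-independent fields, so the growth estimate of Section~\ref{sec:ABC} transfers verbatim up to a fixed multiplicative constant coming from the $y_3$-period; and (ii) your $\eta_n := \min(\eta_0/2,1/3)$ does not by itself guarantee the short-time bound --- you also need $\eta_n \lesssim m_0/(M\|\tilde B_{\initial}^{(n)}\|_{L^2})$ where $M$ is the uniform semigroup bound over $[0,T]\times[1,1+\kappa_0]$, so that third term should be included in the minimum (and the factor $1-\eta_n$ in your $c_n$ should really be $(1-\eta_n)/(1+\eta_n)$ since you are comparing against $\|B_{\initial}\|$ rather than $\|B_{\initial}^{(n)}\|$); both are cosmetic.
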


% \begin{proposition} \label{prop:main}
%      There exist $\lambda >0 $, $\delta >0$, $N_0, \bar{n} \in \NN$ such that  the following holds true for the semigroup $\cS^{\eps}_n (t)$ generated by \eqref{passive-vector} with the velocity field $U^{(n)}$ \eqref{d:building-block}. For any $n \geq \bar{n}$ sufficiently large there exist $\eta_n, c_n  \in (0, 1/2)$ such that for any $\eps \in [\frac{1}{ N_0^2 n^2} , \frac{1}{ N_0^2 (n-1)^2} )$ and $B_{\initial } \in C^\infty (\TT^3; \RR^3)$  satisfying 
%      $\| B_{\initial} -  {B}_{\initial }^{(n)} \|_{L^2} < \eta_n  \|{B}_{\initial }^{(n)} \|_{L^2} $, the following holds true
%      $$\| \cS^{\eps}_n (t)  B_{\initial} \|_{L^2} \geq c_n \e^{\lambda t} \| B_{\initial} \|_{L^2} \qquad \forall t \geq 0 \,.$$
% \end{proposition}

The proof of this proposition follows from the following lemma that is a consequence of the analysis carried out in Section \ref{sec:ABC}.

\begin{lemma}
\label{lemma:rescaled 1}
There exist $N_0 \in \NN$, $\delta >0$ and $\lambda>0$\footnote{This $\lambda >0$ is not the same the one in Proposition \ref{prop:main}.} such that the solution $B^{\eps_1}$ to \eqref{passive-vector} with $\eps = \frac{1}{N_0^2} >0$, $u = U_1$ defined in \eqref{d:building-block} with $n=1$, and $B_{\initial}^{(1)} (x,y,z) = \e^{iz} (-i,1,0) + (0,0,1)$ satisfies
$$\| B^{\eps_1} (t) \|_{L^2 (\TT^3)} \geq \frac{1}{2} \e^{\lambda t} \| B_{\initial} \|_{L^2 (\TT^3)} \,, \quad \forall t \geq 0\,. $$
\end{lemma}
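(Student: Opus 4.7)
The plan is to reduce Lemma \ref{lemma:rescaled 1} to Lemma \ref{lemma:initial growth} via the spatial rescaling $\yy := N_0\xx$. Setting $\tilde B(\tau,\yy) := B^{\eps_1}(\tau, \yy/N_0)$, a direct computation shows that $\tilde B$ solves \eqref{passive-vector} on the torus $\TT^3_{2\pi N_0}$ with the unrescaled ABC flow $V(\yy)=\delta(\cos y_2,\sin y_1,\sin y_2+\cos y_1)$ from \eqref{d:ABC-fixed} and effective diffusivity $\tilde\eps = N_0^2\cdot(1/N_0^2) = 1$. The initial datum becomes $\tilde B_{\initial}(\yy) = \e^{i y_3/N_0}(-i,1,0)+(0,0,1)$, which is $2\pi N_0$-periodic. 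By linearity I split $\tilde B=\tilde B^{(1)}+\tilde B^{(2)}$ according to the two addends of $\tilde B_{\initial}$; since $V$ is independent of $y_3$, the constant field $(0,0,1)$ is a stationary solution, so $\tilde B^{(2)}(\tau)\equiv(0,0,1)$. For $\tilde B^{(1)}$, fixing $\delta$ as in Lemma \ref{lemma:initial growth} and then choosing $N_0$ large enough that $j=1/N_0$ lies in the smallness regime of that lemma, we obtain
\begin{equation*}
\|\tilde B^{(1)}(\tau)\|_{L^2(\TT^3_{2\pi N_0})} \geq \e^{\lambda \tau}\|\tilde B^{(1)}(0)\|_{L^2(\TT^3_{2\pi N_0})}, \qquad \forall\,\tau\geq 0.
\end{equation*}

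The crucial observation is that $\tilde B^{(1)}(\tau)$ is $L^2$-orthogonal to $(0,0,1)$ on $\TT^3_{2\pi N_0}$. Indeed, the operator $\cL(\jj)$ with $\jj=(0,0,1/N_0)$ preserves the modal form $\e^{iy_3/N_0}H(\yy)$ with $H\in L^2(\TT^3)$ (i.e.\ $2\pi$-periodic), so $\tilde B^{(1)}(\tau,\yy)=\e^{iy_3/N_0}\tilde H(\tau,\yy)$. Expanding $\tilde H_3$ in Fourier series on $\TT^3$, every Fourier mode of $\tilde B^{(1)}_3$ on $\TT^3_{2\pi N_0}$ sits at a frequency $(k_1,k_2,k_3+1/N_0)$ with $\kk\in\ZZ^3$, none of which can vanish for $N_0\geq 2$. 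Hence $\int_{\TT^3_{2\pi N_0}}\tilde B^{(1)}_3(\tau,\yy)\,d\yy = 0$, and Pythagoras yields
\begin{equation*}
\|\tilde B(\tau)\|_{L^2}^2 \;=\; \|\tilde B^{(1)}(\tau)\|_{L^2}^2 + \|(0,0,1)\|_{L^2}^2 \;\geq\; \e^{2\lambda\tau}\|\tilde B^{(1)}(0)\|_{L^2}^2 + \|(0,0,1)\|_{L^2}^2.
\end{equation*}

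An explicit computation gives $\|\tilde B^{(1)}(0)\|_{L^2}^2 = 2|\TT^3_{2\pi N_0}|$, $\|(0,0,1)\|_{L^2}^2 = |\TT^3_{2\pi N_0}|$, and (by the same orthogonality at $\tau=0$) $\|\tilde B_{\initial}\|_{L^2}^2 = 3|\TT^3_{2\pi N_0}|$. Hence $\|\tilde B^{(1)}(0)\|_{L^2}^2 = \tfrac{2}{3}\|\tilde B_{\initial}\|_{L^2}^2$, and discarding the nonnegative term $\|(0,0,1)\|_{L^2}^2$ gives
\begin{equation*}
\|\tilde B(\tau)\|_{L^2}\;\geq\; \sqrt{2/3}\,\e^{\lambda\tau}\|\tilde B_{\initial}\|_{L^2}\;\geq\; \tfrac{1}{2}\e^{\lambda\tau}\|\tilde B_{\initial}\|_{L^2}\,, \qquad \forall\,\tau\geq 0.
\end{equation*}
Since the rescaling $\yy=N_0\xx$ multiplies both $\|B(t)\|_{L^2(\TT^3)}$ and $\|B_{\initial}\|_{L^2(\TT^3)}$ by the same factor $N_0^{-3/2}$, the ratio of norms is preserved and the bound transfers to $\TT^3$. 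I expect the orthogonality/Pythagoras step to be the most delicate point, as it relies critically on the modal form being preserved by the flow and on the non-integrality of $1/N_0$; the remainder is a routine rescaling and linearity argument.
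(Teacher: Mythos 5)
Your proof is correct and follows the same strategy as the paper: rescale by $N_0$ to reduce to the unrescaled ABC flow with $\eps=1$, split the initial datum by linearity into the oscillating part and the stationary $(0,0,1)$, and apply Lemma \ref{lemma:initial growth} to the former. You spell out the $L^2$-orthogonality of the two pieces on the large torus $\TT^3_{2\pi N_0}$ via Fourier analysis, making the origin of the constant $\sqrt{2/3}\geq\tfrac12$ explicit; the paper reaches the same conclusion more tersely (via the pointwise identity $|B_{\initial}|\equiv\sqrt 3$ and the $2\pi$-periodicity remark), so your Pythagoras step is a welcome clarification of the same argument rather than a different route.
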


\begin{proof}
    We notice that $B^{\eps_1}(t,\xx)$ is a solution to \eqref{passive-vector} with $\eps = \frac{1}{N_0^2}$, velocity field $u (\xx)= U^{(n)} (\xx)$ and initial datum $B_{\initial} (\xx)$ if and only if $B (t, \xx) = B^{\eps_1} (t, \frac{\xx}{N_0})$ solves \eqref{passive-vector} with $\eps =1$, and velocity field 
    $$ N_0 U^{(1)} (\frac{\xx}{N_0}) = \delta \begin{pmatrix}
        \cos(y) 
        \\
        \sin (x) 
        \\
        \sin(y) + \cos (x)
    \end{pmatrix} \,,$$
 and initial datum $B_{\initial} (\frac{\xx}{N_0})$. In particular, choosing $N_0$ sufficiently large so that  $j = \frac{1}{N_0}$ is sufficiently small and $\delta >0$ is sufficiently small we can apply Lemma \ref{lemma:initial growth} and deduce 
 $$ \|B (t) \|_{L^2 ((0,2\pi)^3)} \geq \frac{1}{2} \e^{\lambda t} \| B_{\initial} (\frac{\cdot}{N_0})\|_{L^2 ((0, 2 \pi)^3)} \,, \quad \forall t \geq 0 \,.$$
Noticing that $B$ takes the form $B(t,x,y,z) = e^{i j z}\e^{\mathcal{L} (j) t} (-i,1,0) + (0,0,1)$ and in particular is $2\pi$ periodic in $x,y$, we deduce that 
$$\| B^{\eps_1} (t) \|_{L^2 (\TT^3)} =  \| B (t) \|_{L^2((0, 2 \pi)^3)} \geq  \frac{1}{2} \e^{\lambda t} \| B_{\initial} (\frac{\cdot}{N_0}) \|_{L^2 ((0, 2 \pi)^3)} = \frac{1}{2} \e^{\lambda t} \| B_{\initial}  \|_{L^2 (\TT^3)}  \,,$$
where the last hold true thanks to the explicit formula $| B_{\initial} (\xx) | = \sqrt{3}\,, \forall \xx$.
\end{proof}

\begin{proof}[Proof of Proposition \ref{prop:main}]

By Lemma \ref{lemma:sectorial2} applied to the result from Lemma \ref{lemma:rescaled 1}, there exists some constants $\omega, c_1>0$ such that the following holds true. The solution $B^{(1)}$ to \eqref{passive-vector} with $\eps \in [\frac{1}{N_0^2}-\omega,\frac{1}{N_0^2}+\omega]$, velocity field $U_1$ defined in \eqref{d:building-block} with $n=1$ and initial datum $B_{\initial}^{(1)} = \e^{i z} (-i,1,0) + (0,0,1)$ satisfies 
\begin{equation}
\|B^{(1)}(t)\|_{L^2(\mathbb{T}^3)} \geq c_1 \e^{\frac{\lambda}{2}t} \,, \qquad  \forall t \geq 0 \,.
\end{equation}
Let $B^{(n)} (t,\xx)=B(t,n \xx)$, it follows that $B^{(n)}$ solves 
\begin{equation}
\partial_t B^{(n)} =\nabla \times (U^{(n)} \times B^{(n)})+\frac{\eps}{n^2} \Delta B^{(n)}
\end{equation}
with initial condition $B_{\initial}^{(n)}= \e^{izn}(-i,1,0) + (0,0,1)$. By periodicity on $\TT^3$ we also have $\| B^{(n)} (t) \|_{L^2 (\TT^3)} = \| B^{(n)} (t) \|_{L^2 (\TT^3)} = \| B^{(1)} (t) \|_{L^2 (\TT^3)} $ for all $t \geq 0$.
Hence, we have shown that for any $n \in \mathbb{N}$, $\eps \in [\frac{1}{n^2}(\frac{1}{N_0^2}-\omega),\frac{1}{n^2}(\frac{1}{N_0^2}+\omega)]$, there exists a solution to 
\begin{equation}
\partial_t B=\nabla \times (U^{(n)} \times B)+\eps \Delta B,
\end{equation}
that grows exponentially at rate at least $\frac{\lambda}{2}$. 
Fix now $n \in \mathbb{N}$. For all $\eps \in [\frac{1}{n^2}(\frac{1}{N_0^2}-\omega),\frac{1}{n^2}(\frac{1}{N_0^2}+\omega)]$ we now apply Lemma \ref{lemma:sectorial2} to deduce the existence of positive constants $\eta_\eps ,\omega_\eps, c_\eps >0$ so that the following holds true. The solution to 
\begin{equation}
\partial_t B=\nabla \times (U^{(n)} \times B)+\eps' \Delta B
\end{equation}
with $\eps'$ satisfying $|\eps'-\eps|<\omega_\eps $, initial datum $B_{\initial}$ satisfying  $\|B_{in}-\e^{in z}(-i,1,0) - (0,0,1)\|_{L^2(\mathbb{T}^3)} \leq \eta_\eps $  grows exponentially as
\begin{equation}
\|B(t)\|_{L^2(\mathbb{T}^3)} \geq c_\eps \e^{\frac{\lambda}{4}t} \| B_{\initial} \|_{L^2} \,, \qquad \forall t \geq 0\,.
\end{equation}
Since we can cover the set  $I= [\frac{1}{n^2}(\frac{1}{N_0^2}-\omega),\frac{1}{n^2}(\frac{1}{N_0^2}+\omega)]$ by
\begin{equation}
 I \subset \bigcup_{\eps \in I} (\eps-\omega_\eps , \eps+\omega_\eps ) \,,
\end{equation}
then, by compactness of $I$, we can extract a finite subcover so that 
$$ I \subset  \bigcup_{i=1}^N (\eps_i -\omega_{\eps_i} , \eps_i +\omega_{\eps_i} )  $$
 We define $\eta_n= \frac{1}{\sqrt{2}} \inf_{i=1,\dots N}\eta_{\eps_i}$, $c_n=\inf_{i=1, \dots N}c_{\eps_i}$.
Thus, we have proven that for any $\eps \in [\frac{1}{n^2}(\frac{1}{N_0^2}-\omega),\frac{1}{n^2}(\frac{1}{N_0^2}+\omega)]$ there exist $\eta_n , c_n >0$ such that the following holds true. The  solution $B$ of  
\begin{equation}
\partial_t B=\nabla \times (U^{(n)} \times B)+\eps \Delta B
\end{equation}
with $\eps \in [\frac{1}{n^2}(\frac{1}{N_0^2}-\omega),\frac{1}{n^2}(\frac{1}{N_0^2}+\omega)]$, initial datum $B_{\initial}$ satisfying  $\|B_{in}-B^{(n)}_{\initial}\|_{L^2(\mathbb{T}^3)} < \sqrt{3} \eta_n = \eta_n \| B^{(n)}_{\initial}\|_{L^2} $  grows exponentially as
\begin{equation}
\|B(t)\|_{L^2(\mathbb{T}^3)} \geq c_n \e^{\frac{\lambda}{4}t} \| B_{\initial}\|_{L^2}\,.
\end{equation}
Noticing that $ [\frac{1}{N_0^2 n^2} , \frac{1}{N_0^2 (n-1)^2}] \subset [\frac{1}{n^2}(\frac{1}{N_0^2}-\omega),\frac{1}{n^2}(\frac{1}{N_0^2}+\omega)]$ for any $n \geq \bar{n}$ where $\bar{n}$ depends only on $\omega$, we conclude the proof.
\end{proof}

\section{Construction of the velocity field} \label{sec:construction}

\subsection{Parameters} \label{subsec:paramters}
Firstly, we define the sequence of diffusivity parameters
\begin{equation} \label{d:eps-n}
    \eps_{n} = \frac{1}{N_0^2 n^2} \qquad \forall n \in \NN \,.
\end{equation}
Then, we fix a sequence that visits each $n \in \NN$ infinitely many times. More precisely, we fix 
a sequence $(a_k)_{k \ge 1}$ of natural numbers such that
\[
\forall n \in \mathbb{N}, \quad \#\{ k \in \mathbb{N} : a_k = n \} = \infty,
\]
where $\#$ denotes the cardinality of the set.  
Furthermore,  for every $n \in \mathbb{N}$, we denote by $(k_j)_{j \ge 1}$   an infinite strictly increasing sequence of indices  such that
\[
a_{k_j} = n \quad \text{for all } j \ge 1\,,
\]
where we drop the dependence on $n$ of the sequence $(k_j)_j$ to simplify the notation.

One explicit example is the ``diagonal'' sequence
\[
a_1 = 1, \; a_2 = 1, \; a_3 = 2, \; a_4 = 1, \; a_5 = 2, \; a_6 = 3, \; a_7 = 1, \dots
\]
Finally, we set $t_0 = 0$ and define recursively the sequence of times $(t_k)_{k \ge 1}$ as follows. 
Suppose we have defined $t_0,t_1, \ldots, t_{k-1}$. Let $n \in \NN$ so that $a_k = n $, 
we choose $t_k$ large enough so that
\begin{equation}\label{eq:tk}
    t_k \ge t_{k-1} + 2 n^2 t_{k-1} + 2 n^2 \log\!\left( \frac{10}{\eta_n} \right) + \frac{10}{\lambda} \log(C_n),
\end{equation}
where $\lambda, C_n, \eta_n > 0$ are the constants given in Proposition~\ref{prop:main}.

\subsection{Definition of the velocity field} 

Firstly, we define another building block $U^{(n)}_{g}: \T^3 \to \RR^3$ which is needed to create an initial datum close to  \eqref{d:initial-B} so that we can apply Proposition \ref{prop:main}. For any $n \in \NN$ we simply define the 1-Lipschitz velocity field
\begin{align} \label{d:velocity-generation}
    U^{(n)}_{g}(x,y,z) = \frac{e^{i n  z}}{i n } (-i, 1,0)  \,.
\end{align}

We define the velocity field on each time interval 
 $t \in [t_{k-1}, t_k]$ for any $k \in \NN$. Let $n \in \NN$ so that $a_k = n $. Then, denoting by 
 \begin{equation} \label{d:delta-t}
     \Delta t  = 4 n^2 N_0^2 t_{k-1} + 4 n^2 N_0^2 \log(\frac{10}{\eta_n}) + \frac{4}{\lambda} \log(C_n) \,,
 \end{equation} 
  and  we define 
 \begin{align} \label{d:u}
     u (t, \cdot ) = \begin{cases}
         0 \qquad  & \text{if} \quad t \in [t_{k-1}, t_{k-1} + \frac{ \Delta t}{2}) 
         \\
         U^{(n)}_{g} (\cdot) & \text{if} \quad  t \in [ t_{k-1} + \frac{ \Delta t}{2}, t_{k-1} + \frac{ \Delta t}{2} +1 )  
         \\
         U^{(n)} (\cdot ) & \text{if} \quad  t \in [  t_{k-1} + \frac{ \Delta t}{2} +1 , t_k)  
     \end{cases}
 \end{align}

 \begin{lemma}
     The velocity field defined in \eqref{d:u} satisfies
     $$ \| u (t) \|_{W^{1,\infty} (\TT^3)} \leq 2 \qquad \forall t >0 \,.$$
 \end{lemma}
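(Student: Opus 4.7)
The proof is a direct bookkeeping verification: the velocity field $u$ in \eqref{d:u} is piecewise time-constant on each interval $[t_{k-1},t_k]$, equalling one of the three autonomous spatial fields $0$, $U^{(n)}_{g}$, or $U^{(n)}$, where $n=a_k$. Hence it suffices to establish the bound $\|\cdot\|_{W^{1,\infty}(\TT^3)}\le 2$ separately for each of these three fields, and uniformly in $n\in\NN$. The case $u\equiv 0$ is immediate.

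For $U^{(n)}_{g}$ defined in \eqref{d:velocity-generation}, the plan is to rewrite the complex expression in real form. Using $e^{in z}/(in)=\sin(nz)/n-i\cos(nz)/n$, the real part of $U^{(n)}_{g}(x,y,z)=(e^{in z}/(in))(-i,1,0)$ is a vector whose components are each of the form $\pm\cos(nz)/n$ or $\pm\sin(nz)/n$. Thus $\|U^{(n)}_{g}\|_{L^\infty}\le \sqrt{2}/n$, and each nonzero partial derivative is of the form $\pm\cos(nz)$ or $\pm\sin(nz)$ (the factor $n$ released by the chain rule cancels the prefactor $1/n$), so $\|\nabla U^{(n)}_{g}\|_{L^\infty}\le \sqrt{2}$. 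In particular $\|U^{(n)}_{g}\|_{W^{1,\infty}}\le 2$ for all $n\ge 1$.

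For $U^{(n)}$ defined in \eqref{d:building-block}, the scaling is designed so that the same cancellation occurs. The prefactor $\delta/(N_0 n)$ gives $\|U^{(n)}\|_{L^\infty}\le C\delta/(N_0n)\le C\delta$; and each partial derivative pulls out a spatial frequency $nN_0$ that cancels the prefactor, leaving entries of the Jacobian matrix that are bounded pointwise by $\delta$ (there are only four nonzero entries: $\partial_y U^{(n)}_1, \partial_x U^{(n)}_2, \partial_x U^{(n)}_3, \partial_y U^{(n)}_3$, each of modulus at most $\delta$). Hence $\|\nabla U^{(n)}\|_{L^\infty}\le C\delta$ for an absolute constant $C$. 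Since the constant $\delta\in(0,1)$ is the small parameter fixed in Proposition~\ref{prop:main}, we may suppose $\delta$ is small enough that $\|U^{(n)}\|_{W^{1,\infty}}\le 2$ for all $n\ge 1$.

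Combining the three cases gives the uniform bound $\|u(t)\|_{W^{1,\infty}(\TT^3)}\le 2$ for all $t>0$. There is no real obstacle here beyond the (minor) point of interpreting the complex expression for $U^{(n)}_{g}$ correctly in real coordinates; everything else is an explicit computation, and the bound is essentially built into the scale-invariance of the rescaling $U^{(1)}(\xx)\mapsto n^{-1}U^{(1)}(n\xx)$ that preserves the Lipschitz norm.
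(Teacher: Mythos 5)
Your proof is correct; the paper states this lemma without proof, as it is exactly the routine verification you carry out: $u$ is piecewise constant in time, equal to one of $0$, $U^{(n)}_g$, or $U^{(n)}$ on each subinterval, and the prefactors $1/(in)$ in \eqref{d:velocity-generation} and $\delta/(N_0 n)$ in \eqref{d:building-block} are designed precisely to cancel the frequency $n$ (resp.\ $nN_0$) released by the chain rule, yielding $n$-independent $W^{1,\infty}$ bounds. One small caveat: with your Euclidean-norm computations ($|U^{(n)}_g|=\sqrt2/n$ and $|\partial_z U^{(n)}_g|=\sqrt2$), the conclusion ``$\|U^{(n)}_g\|_{W^{1,\infty}}\le 2$'' holds if the $W^{1,\infty}$ norm is taken as $\max(\|u\|_\infty,\|\nabla u\|_\infty)$; with the sum convention you would get $\sqrt2(1+1/n)\le 2\sqrt2$, slightly above $2$ at $n=1$. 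The paper itself calls $U^{(n)}_g$ ``1-Lipschitz'' (consistent with a componentwise-max convention giving $\|\nabla U^{(n)}_g\|_{L^\infty}=1$), and the precise constant $2$ versus $2\sqrt2$ is immaterial to the energy estimates where this lemma is used — so this is a matter of norm convention, not a genuine gap.
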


 \begin{lemma} \label{lemma:time}
     Let $u=U^{(c)}_n \in C^\infty (\TT^3)$ and $B_{\initial} = (0,0,1) + B_{\initial, \text{err}}$ with $\| B_{\initial, \text{err}} \|_{L^2} < \eta_n/10$ and $B$ be the solution to \eqref{passive-vector} with $\eps \in [\frac{1}{ N_0^2n^2} , \frac{1}{ N_0^2 (n-1)^2} ) $ and $n \geq 2$ where $\eps_\star$ is given by Proposition \ref{prop:main}. Then  there exists $\alpha \in [1, 2]$ such that
     $$\| \alpha  B(1) - B_{\initial}^{(n)} \|_{L^2} < \eta_n  \|  B_{\initial }^{(n)}  \|_{L^2} $$
 \end{lemma}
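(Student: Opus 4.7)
The plan is to split the solution linearly as $B = B_1 + B_2$, where $B_1$ solves \eqref{passive-vector} with initial datum $(0,0,1)$ and $B_2$ with initial datum $B_{\initial,\mathrm{err}}$. Since $U_g^{(n)}$ depends only on $z$ and has vanishing third component, the two-dimensional subspace spanned by $(0,0,1)$ and $e^{in z}(-i,1,0)$ is invariant under the kinematic dynamo operator, so $B_1$ can be computed in closed form.

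First I would substitute the ansatz
\[
B_1(t,\xx) = c_1(t)(0,0,1) + c_2(t)\,e^{inz}(-i,1,0)
\]
into \eqref{passive-vector}. Because $(U_g^{(n)})_3 = 0$ and $B_1$ is independent of $x,y$, the convection term $U_g^{(n)}\!\cdot\!\nabla B_1$ vanishes identically. Using $\partial_z U_g^{(n)} = e^{inz}(-i,1,0)$, the stretching term becomes $B_1\!\cdot\!\nabla U_g^{(n)} = (B_1)_3\,\partial_z U_g^{(n)} = c_1(t)\,e^{inz}(-i,1,0)$, and together with $\Delta(e^{inz}(-i,1,0)) = -n^2 e^{inz}(-i,1,0)$ I would match coefficients to obtain
\[
\dot c_1 = 0, \qquad \dot c_2 + \eps n^2 c_2 = c_1, \qquad c_1(0)=1,\ c_2(0)=0,
\]
whose explicit solution is $c_1 \equiv 1$ and $\beta := c_2(1) = (1 - e^{-\eps n^2})/(\eps n^2)$.

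Next I would set $\alpha := 1/\beta$ and check that $\alpha \in [1,2]$. For $n\geq 2$ and $\eps\in[\tfrac{1}{N_0^2 n^2},\tfrac{1}{N_0^2(n-1)^2})$ one has $\eps n^2 \leq 4/N_0^2$, so a Taylor expansion of the function $x\mapsto (1-e^{-x})/x$ gives $|1-\beta|\leq \eps n^2/2\leq 2/N_0^2$. Enlarging $N_0$ (a universal choice made at the start of the construction, compatible with the one in Proposition~\ref{prop:main}) secures $\beta \geq 1/2$ and hence $\alpha\in[1,2]$ with $|\alpha-1|\leq C/N_0^2$. By construction,
\[
\alpha B_1(1) - B_{\initial}^{(n)} = (\alpha-1)(0,0,1),
\]
and the $L^2$-orthogonality of the two modes together with $\|B_{\initial}^{(n)}\|_{L^2}^2 = 3|\TT^3|$ yields a relative error of $|\alpha-1|/\sqrt{3}\leq C'/N_0^2$.

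The remainder $B_2$ is controlled by a standard energy estimate: since $\nabla\!\cdot\!U_g^{(n)} = 0$, one checks (even for the complex-valued $U_g^{(n)}$) that the transport term has vanishing real part when paired against $\overline{B_2}$, so that
\[
\tfrac{d}{dt}\|B_2\|_{L^2}^2 \leq 2\|\nabla U_g^{(n)}\|_{L^\infty}\,\|B_2\|_{L^2}^2.
\]
Since $\|\nabla U_g^{(n)}\|_{L^\infty}\leq \sqrt{2}$ uniformly in $n$, Grönwall gives $\|B_2(1)\|_{L^2}\leq e^{\sqrt 2}\,\eta_n/10$; multiplying by $\alpha\leq 2$ and dividing by the fixed constant $\|B_{\initial}^{(n)}\|_{L^2}$ bounds the $B_2$-contribution to the relative error by a universal multiple of $\eta_n$, which by the choice of $N_0$ is at most $\eta_n/2$. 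Adding the two pieces yields the claim. The only obstacle is choosing $N_0$ large enough to satisfy $C'/N_0^2 < \eta_n/2$ on top of the hypotheses of Proposition~\ref{prop:main}; this is a single universal condition because the rescaling structure in the proof of Proposition~\ref{prop:main} ensures $\eta_n$ is bounded below by some $\eta_\star > 0$ independent of $n$.
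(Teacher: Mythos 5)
Your decomposition $B = B_1 + B_2$, the explicit computation of $B_1$ in the invariant two-mode subspace spanned by $(0,0,1)$ and $e^{inz}(-i,1,0)$, and the energy estimate for $B_2$ all follow the same route as the paper's proof. Your formula for $\beta = c_2(1) = (1-e^{-\eps n^2})/(\eps n^2)$ and the choice $\alpha = 1/\beta \in [1,2]$ agree with the paper, where note that the paper's displayed formula for $\tilde B$ omits the persistent constant $(0,0,1)$ component (apparently a typo), which is exactly the component you track carefully.

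The issue is in how you close the argument. You correctly observe that $\alpha B_1(1) - B_{\initial}^{(n)} = (\alpha-1)(0,0,1) \neq 0$ with $|\alpha-1|\lesssim 1/N_0^2$, and you need this absorbed into the right-hand side $\eta_n\|B_{\initial}^{(n)}\|_{L^2}$. Your proposed fix is the assertion that $\eta_n$ is bounded below by some $\eta_\star > 0$ uniformly in $n$, attributed to the rescaling structure of Proposition \ref{prop:main}. This is not something the paper establishes: the constants $\eta_n$ in Proposition \ref{prop:main} arise from invoking Lemma \ref{lemma:sectorial2} at each $\eps$ in a compact interval and then passing to a finite subcover, producing $n$-dependent infima with no stated lower bound. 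Moreover the rescaling $\Phi_n : g \mapsto g(n\cdot)$ is an isometry \emph{onto its range} (the $2\pi/n$-periodic functions) rather than onto $L^2(\TT^3)$, so perturbations living outside $\mathrm{Ran}(\Phi_n)$ are not controlled by conjugation with the $n=1$ problem; a uniform-in-$n$ stability constant would require an additional argument about the semigroup action on the complementary Fourier cosets, which neither you nor the paper supplies. Since $N_0$ is fixed once and for all in Proposition \ref{prop:main} and cannot be enlarged as a function of $n$, the inequality $|\alpha-1| < c\,\eta_n$ is genuinely at risk if $\eta_n \to 0$. This is a real gap in your proof — although, to be fair, the paper's own proof writes $\alpha\tilde B(1) = B_{\initial}^{(n)}$ as an exact identity and thereby silently elides the very residual you identified, so the gap is not introduced by you; you have merely made visible a loose end that the paper's terse argument passes over.
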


 \begin{proof}
     Let $\tilde{B}$ be the solution to \eqref{passive-vector} with initial condition $\tilde{B} = (0,0,1)$. Then, by explicit computation we have 
     $$ \tilde{B} (t,z) = \varphi (t,z) (-i, 1,0) $$
     where 
     $$ \varphi (t,z) = \int_0^t \e^{- \eps n^2 (t-s)} e^{i nz} ds = \frac{1}{\eps n^2} (1- \e^{- \eps n^2}) \e^{in z} \,.$$
     Since $\eps \in [\frac{1}{ N_0^2 n^2} , \frac{1}{ N_0^2 (n-1)^2} ) $ we deduce that $\alpha^{-1} =\frac{1}{\eps n^2} (1- \e^{- \eps n^2}) \in  [1/2,1]$.
     Hence,  $\alpha \tilde{B} (1) = B_{\initial}^{(n)}$ with $\alpha \in [1,2]$. 

     Finally, we observe that $\| u \|_{W^{1, \infty}} \leq 2$ and by energy estimate the solution $B_{err}$ with initial condition $B_{\initial, err}$ satisfies
     $$\| B_{err} (t) \|_{L^2} \leq e^{t} \| B_{\initial , err} \|_{L^2} \,.$$
     Therefore, we conclude by the linearity of the equation.
 \end{proof}

\section{Proof of Theorem \ref{thm:main}}

We fix $\eps_\star \in (0, \frac{1}{N_0^2 \bar{n}^2})$ where $N_0 \in \NN$ and $\bar{n} \geq 2$ are natural numbers so that Proposition \ref{prop:main} applies.
We define the initial condition 
$$ B_{\initial } = (0,0,1) $$
and we denote by $B$ the solution of the passive vector equation \eqref{passive-vector} with velocity field $u \in L^\infty $ and diffusivity parameter $\eps \in (0, \eps_\star)$.
Since $\eps \in (0, \eps_\star)$, there exists $n \in \NN$ so that $\eps \in [\frac{1}{N_0^2 n^2}, \frac{1}{N_0^2 (n-1)^2} )$, where $n \geq 2$ is sufficiently large so that  Proposition \ref{prop:main} applies.  We now fix an increasing subsequence $( k_j )_{j} \subset \NN$ such that the sequence $(a_k)_k$  defined in Section \ref{subsec:paramters} satisfies
$$ a_{k_j} = n \qquad \forall j \,. $$

Then, for the sequence of times $( t_{k})_{k \in \NN}$  defined in Section \ref{subsec:paramters} we claim that 
\begin{equation} \label{clam}
\| B (t_{k_j}) \|_{L^2} \geq \e^{\sfrac{\lambda t_{k_j} }{4}} \| B_{\initial} \|_{L^2}  \qquad \forall j \in \NN \tag{claim}
\end{equation}
from which we deduce the theorem, since $\lambda >0$ given by Proposition \ref{prop:main}. Let $j \in \NN$ be arbitrary. By standard energy estimates using that $\| \nabla u \|_{L^\infty} \leq 2$ we have that 
$$\|B (t_{k_j -1}) \|_{L^2}  \leq \e^{ t} \| B_{\initial} \|_{L^2} \,.$$
Thanks to $u \equiv 0$ on $t \in [t_{k_j -1} , t_{k_j -1} + \Delta t/2)$ by definition \eqref{d:u} and the fact that the average of $B$ is conserved in time we conclude by the heat semigroup property that 
$$  \| B(t) -(0,0,1)  \|_{L^2} \leq \e^{- \eps (t - t_{k_j -1})/2} \| B (t_{k-1}) - (0,0,1) \|_{L^2} \quad \forall t \in  [t_{k_j -1} , t_{k_j -1} + \frac{\Delta t}{2})\,.$$
Thanks to
$\eps \geq \frac{1}{N_0^2 n^2}$
and  $\Delta t \geq  {4 N_0^2 n^2} t_{k-1} + {4 N_0^2 n^2} \log(\frac{10}{\eta_n})$ we deduce
$$\| B(t_{k_j -1 } +  \frac{\Delta t}{2} ) - (0,0,1) \|_{L^2} \leq \frac{\eta_n}{10} \,.  $$
By definition of $u$ on $t \in [t_{k_j -1} + \frac{\Delta t}{2}, t_{k_j -1} + \frac{\Delta t}{2} + 1 )$ and Lemma \ref{lemma:time}, recalling the notation for $B_{\initial , n}^{(p)}$ in \eqref{d:initial-B} we deduce that there exists $\alpha \in [1,2]$ such that
$$\| \alpha B(t_{k_j -1} + \frac{\Delta t}{2} + 1) -  B_{\initial  }^{(n)} \|_{L^2} < \eta_n  \| B_{\initial }^{(n)} \|_{L^2} \,.$$
Finally, using the linearity of the equation we apply Proposition \ref{prop:main} and using the bound on $t_{k_j}$ given by \eqref{eq:tk} we deduce that 
\begin{align}
    \| B(t_{k_j}) \|_{L^2} & \geq \frac{C_n}{\alpha} \e^{\lambda (t_{k_j} - (t_{k_j -1} + \frac{\Delta t}{2} + 1))} \|B(t_{k_j -1} + \frac{\Delta t}{2} + 1) \|_{L^2} 
    \\
    &\geq \frac{C_n}{4} \e^{\lambda t_{k_j}/3} \| B_{\initial }^{(n)} \|_{L^2} 
    \\
    & \geq \e^{\lambda t_{k_j}/4}   \| B_{\initial} \|_{L^2} \,.
\end{align}

\appendix

\section{Proofs on sectorial operators} \label{sec:appendix}
In this Appendix we prove the results from Section \ref{sec:sectorial}.

\begin{proof}[Proof of Lemma \ref{lemma:sectorial1}]
To aid in readability, we shall denote the resolvent $R_z=(z-T)^{-1}$. By standard analytic Semigroup theory, we for any $b > b_0$, $\theta \in (0,\theta_0)$, the contour $\gamma_{\theta,b}$ given by the boundary of the sector $\{Arg(b-z) \in [-\frac{\pi}{2}+\theta,\frac{\pi}{2}-\theta]\}$ satisfies 
\begin{equation}
\e^{T t}=\frac{1}{2 \pi i}\int_{\gamma_{\theta,b}}(z-T)^{-1} \dd z.
\end{equation}
Now, take $p<b$. Then, for all $\zeta>0$ small enough, the strip $\{\Re(z) \in [p-\zeta,p-\frac{\zeta}{2}]\}$ does not intersect the spectrum of $T$. Indeed, suppose this was not true. Then, there would exist a sequence of eigenvalues $\lambda_n$ of $T$, with $\Re(\lambda_n) \in [p-\frac{1}{3^n},p-\frac{1}{2\times 3^n }]$. In particular, note that all the $\lambda_n$ must be distinct, since the intervals $[p-\frac{1}{3^n},p-\frac{1}{2\times 3^n }]$ are non-overlapping. Furthermore, due to the sectorial property, the $\lambda_n$ have uniformly bounded imaginary part. Thus, there exists a (non-relabeled) convergent subsequence, converging to some $\lambda \in \mathbb{C}$, $\Re(\lambda)=p$. By closedness of the spectrum, $\lambda \in \sigma(T)$, and clearly $\lambda$ is an accumulation point of the spectrum, which cannot occur due to the compactness of the resolvent of $T$. Therefore, for all $\zeta$ small enough, we can construct new contours $\gamma^1_{\theta,b}$, $\gamma^2_{\theta,b}$, which are constructed as follows. $\gamma^1_{\theta,b}$ proceeds by first traversing $\gamma_{\theta,b}$ along its lower branch, until the line $\{\Re(z)=p-\frac{\zeta}{2}\}$ is reached. Then, traverse the line $\{\Re(z)=p-\frac{\zeta}{2}\}$ upwards until it intersects the upper branch of $\gamma_{\theta,b}$, at which point one traverses this upper branch towards the left. Similarly, $\gamma^2_{\theta,b}$ is obtained by first traversing the contour $\gamma_{\theta,b} \cap \{\Re(z) \geq p-\frac{\zeta}{2}\}$, and then traversing the line $\{\Re(z) =p-\frac{\zeta}{2}\}$ downwards in order to close the contour. Overall, since the traversals along $\{\Re(z) =p-\frac{\zeta}{2}\}$ of the contour cancel, we immediately obtain 
\begin{equation}
\e^{T t}=\frac{1}{2 \pi i}\int_{\gamma^1_{\theta,b}}R_z \e^{zt} \dd z+\frac{1}{2 \pi i}\int_{\gamma^2_{\theta,b}}R_z \e^{zt} \dd z.
\end{equation}
But note now that since $\{\Re(z) \in [p-\zeta, p-\frac{\zeta}{2}]$ contains no spectral points of $T$, by Cauchy's theorem we may deform $\gamma^1_{\theta,b}$ to traverse the line $\{\Re(z)=p-\zeta\}$ upwards instead of the line $\{\Re(z)=p-\frac{\zeta}{2}\}$. Hence, setting $\gamma_1=\gamma^1_{\theta,b}$ (the deformed version), and $\gamma_2=\gamma^2_{\theta,b}$, these satisfy the claimed properties.

Next, we prove the claimed growth bounds. To begin, note that $P$ from \eqref{eq:P abstract} is well defined, and is nothing but the Riesz projector onto the space spanned by eigenvalues of $T$ with real part at least $p-\frac{\zeta}{2}$. By the functional calculus, it holds that $\e^{T t} P=S_2(t)$. In fact, we furthermore deduce that that $S_2(t)$ in fact generates a group. In particular, the integral expression defining $S_2(t)$ is well defined if $t<0$, and by the functional calculus it follows that $S(-t)S(t)=P$. Furthermore, observe the estimate 
\begin{equation}
\|S_2(-t)\| \leq \frac{1}{2\pi}\mathrm{len}(\gamma_2)\sup_{z \in \gamma_2}\|R_z\| \e^{-t(p-\frac{\zeta}{2})}.
\end{equation}
Therefore, we deduce that 
\begin{equation}
\|Px\|=\|S(-t) S(t)x\|\leq \frac{1}{2\pi}\mathrm{len}(\gamma_2)\sup_{z \in \gamma_2}\|R_z\| \e^{-t(p-\frac{\zeta}{2})}\|S(t)x\|,
\end{equation}
and thus 
\begin{equation}
\|S_2(t)x\|\geq C_1(\gamma_2,\sup_{z \in \gamma_2}\|R_z\|)e^{t(p-\frac{\zeta}{2})}\|Px\|.
\end{equation}
Next, we proceed to bounding $S_1(t)$. To do so, we identify the contour $\gamma_{\theta,b}$ with the lines $\ell_1=\{x(1+i\alpha)-i\alpha b, x \leq b\}$, $\ell_2=\{x(1-i\alpha)+i\alpha b, x \leq b\}$, for some $\alpha>0$. Then, $S_1(t)$ is nothing but 
\begin{align}
&S_1(t)=\frac{1}{2 \pi i}\int_{-\infty}^{p-\zeta} R_{x(1+i\alpha)-i\alpha b} \e^{t(x(1+i\alpha)-i\alpha b)}(1+i\alpha) \dd x +\frac{1}{2 \pi i}\int_{\alpha(p-\zeta-b)}^{\alpha(b-p+\zeta)}iR_{p-\zeta+ix}\e^{t(p-\zeta)} \dd x\\
&+\frac{1}{2 \pi i}\int_{p-\zeta}^{-\infty} R_{x(1-i\alpha)+i\alpha b} \e^{t(x(1+i\alpha)-i\alpha b)}(1-i\alpha) \dd x.
\end{align}
Hence, bounding these terms we obtain
\begin{equation}
\|S_1(t)\|\leq C_2(\gamma_1, \sup_{z \in \gamma_1}\|R_z\|)(1+t^{-1})\e^{t(p-\zeta)}.
\end{equation}
Hence, overall we observe that if $Px=0$, it holds 
\begin{equation}
\|S(t)x\|\leq C_2(\gamma_1, \sup_{z \in \gamma_1}\|R_z\|)(1+t^{-1})\e^{t(p-\zeta)}\|x\|,
\end{equation}
whereas in general we obtain the lower bound 
\begin{equation}
\|S(t)x\|\geq C_1(\gamma_2, \sup_{z \in \gamma_2}\|R_z\|)e^{t(p-\frac{\zeta}{2})}\|Px\|-C_2(\gamma_1, \sup_{z \in \gamma_1}\|R_z\|)(1+t^{-1})\e^{t(p-\zeta)}\|x\|,
\end{equation}
and so the claim of the Lemma follows.
\end{proof}

\medskip

\begin{proof}[Proof of Lemma \ref{lemma:sectorial2}]
Upon replacing $x_0 \mapsto \frac{x_0}{\|x_0\|}$ we can assume that $\|x_0\|=1$ and deduce the general result by linearity. Note that there holds for $\lambda \notin (-\infty, 0]$
\begin{align}
&\|T_1(\kappa T_0-\lambda)^{-1}\|\leq \delta\kappa \|T_0(\kappa T_0-\lambda)^{-1}\|+C(\delta \kappa) \|(\kappa T_0-\lambda)^{-1}\|\\
&\leq \delta +\delta |\lambda| \|(\kappa T_0-\lambda)^{-1}\|+C(\delta \kappa) \|(\kappa T_0-\lambda)^{-1}\|\\
&{ \leq} \delta (1+\frac{|\lambda|}{\dist(\lambda, (-\infty,0])})+\frac{C(\delta \kappa)}{\dist(\lambda, (-\infty,0])}.
\end{align}
Thus, there exists some $b_0>0$, $0<\theta_0<\frac{\pi}{2}$ so that for all $\lambda \notin S_{\theta_0,b_0}$, $ \kappa \in (\frac{1}{2},\frac{3}{2})$, it holds $|\lambda|\leq 2\dist(\lambda, (-\infty,0])$, and $\frac{C( \frac{\kappa}{6})}{\dist(\lambda, (-\infty,0])}\leq \frac{1}{3}$. Thus, there holds 
\begin{equation}
\|T_1(\kappa T_0-\lambda)^{-1}\|\leq \frac{5}{6}
\end{equation}
uniformly for $\lambda \notin S_{\theta_0,b_0}$, $\kappa \in (\frac{1}{2},\frac{3}{2})$. Hence, we have 
\begin{equation}
\|(T(\kappa)-\lambda)^{-1}\|=\|(\kappa T_0-\lambda)^{-1}(\Id+T_1(\kappa T_0-\lambda)^{-1})^{-1}\|\leq \frac{5}{6\dist(\lambda, (-\infty,0])},
\end{equation}
which is the analytic semigroup bound. Therefore, for all $\kappa \in (\frac{1}{2}, \frac{3}{2})$, the semigroup generated by $T(\kappa)$ is given by 
\begin{equation}
\e^{T(\kappa)t}=\frac{1}{2 \pi i}\int_{\gamma}(z-T(\kappa))^{-1} \e^{z t} \dd z,
\end{equation}
where the contour $\gamma$ is independent of $\kappa$, and is the boundary of some sector $S_{\theta,b}$ with $\theta<\theta_0$, $b >b_0$. Then, by assumption and Lemma \ref{lemma:sectorial1}, we may write 
\begin{equation}
\e^{T(1)t}=\frac{1}{2 \pi i}\int_{\gamma_1}(z-T(1))^{-1} \e^{zt} \dd z+\frac{1}{2 \pi i}\int_{\gamma_2}(z-T(1))^{-1}\e^{zt} \dd z,
\end{equation}
where $\gamma_1$, $\gamma_2$ are constructed from $\gamma$ as in the proof of Lemma \ref{lemma:sectorial1}. In particular, $\gamma_1 \subset \{\Re(z) \leq p-\zeta\}$ for some $\zeta>0$ and $\gamma_2 \subset \{\Re(z) \geq p-\frac{\zeta}{2}\}$. Now, we claim this decomposition is valid for all $\kappa$ sufficiently close to $1$. Indeed, from the proof of Lemma \ref{lemma:sectorial1}, it is clear that the decomposition follows as soon as $\sigma(T(\kappa)) \cap \{\Re(z) \in [p-\zeta, p-\frac{\zeta}{2}]\} = \emptyset$. But now, we compute 
\begin{equation}
(\kappa T_0+T_1-\lambda)=(\Id+(\kappa-1)T_0 (T_0+T_1-\lambda)^{-1})( T_0+T_1-\lambda).
\end{equation}
As soon as $\|(\kappa-1)T_0 ( T_0+T_1-\lambda)^{-1}\|<1$ this is invertible. But now,
\begin{equation}
\|T_0 (T_0+T_1-\lambda)^{-1}\| \leq 1+\|(T_1-\lambda)(T_0+T_1-\lambda)^{-1}\|\leq 1+|\lambda|\|(T_0+T_1-\lambda)^{-1}\|+\|T_1(T_0+T_1-\lambda)^{-1}\|.
\end{equation}
But by assumption this last term may be bounded by 
\begin{equation}
\frac{1}{2}\|T_0(T_0+T_1-\lambda)^{-1}\|+C(\frac{1}{2})\|(T_0+T_1-\lambda)^{-1}\|. 
\end{equation}
Hence, we observe that 
\begin{equation}
\|T_0 (T_0+T_1-\lambda)^{-1}\|  \leq 2\left (1+(|\lambda|+C(\frac{1}{2})\|(T_0+T_1-\lambda)^{-1}\| \right).
\end{equation}
Taking suprema over $\lambda \in S_{\theta,b} \cap \{\Re(z) \in [p-\zeta, p-\frac{\zeta}{2}]\}$, we observe from the continuity of the resolvent with respect to $\lambda $ that 
\begin{equation}
\sup_{\lambda \in S_{\theta,b} \cap \{\Re(z) \in [p-\zeta, p-\frac{\zeta}{2}]\}}\|(T_0(T_0+T_1-\lambda)^{-1}\|  \leq K,
\end{equation}
for some constant $K$.
Thus, $\{\Re(z) \in [p-\zeta, p-\frac{\zeta}{2}]\} \subset \rho(T(\kappa))$ if $|\kappa-1|\leq \frac{1}{2}K^{-1}$, and we have the uniform resolvent bound for all $\lambda$ in this strip 
\begin{equation}
\|(T(\kappa)-\lambda)^{-1}\|\leq C(K). 
\end{equation}
Furthermore, combined with the estimates at the beginning of the proof, we deduce that $\sup_{z \in \gamma_1}\|(z-T(\kappa))^{-1}\|+\sup_{z \in \gamma_2}\|(z-T(\kappa))^{-1}\|\leq C(K)$ for all $|\kappa-1|\leq \frac{1}{2}K^{-1}$. Thus, uniformly for $\kappa$ in this interval, it holds 
\begin{equation}
\|\e^{T(\kappa)t} x\|\geq C_1 \e^{t(p-\frac{\zeta}{2})}\|P(\kappa)x\|-C_2(1+t^{-1})\e^{t(p-\zeta)}\|x\|,
\end{equation}
where 
\begin{equation}
P(\kappa)=\frac{1}{2 \pi i}\int_{\gamma_2}(z-T(\kappa))^{-1} \\d z.
\end{equation}
Now, by the same estimates as above, it follows that $\|P(\kappa)-P(1)\| \to 0$ as $\kappa \to 1$. Furthermore, by Lemma \ref{lemma:sectorial1} it holds that $\|Px_0\|>0$. Therefore, we have 
\begin{equation}
\|P(\kappa)x\|=\|(P(\kappa)-P)x+Px\|\geq \|P x_0\|-\|P(x-x_0)\|-\|(P-P(\kappa))x\|.
\end{equation}
Thus, so long as $\|P-P(\kappa)\|\leq \frac{\|Px_0\|}{3\|x\|}$, $\|x-x_0\|\leq \frac{\|Px_0\|}{3\|P\|}$, there holds 
\begin{equation}
\|P(\kappa)x\| \geq \frac{\|Px_0\|}{3}.
\end{equation}
Hence, this is true for all $\|x-x_0\|\leq \eta_0 $, for some $\eta_0>0$, and for $|\kappa-1|\leq \kappa_0$ for some $\kappa_0>0$. Thus, for all such $x, \kappa $ it holds 
\begin{equation}
\|\e^{T(\kappa)t} x\|\geq \frac{1}{3}C_1 \e^{t(p-\frac{\zeta}{2})}\|Px_0\|-2C_2(1+t^{-1})\e^{t(p-\zeta)}\|x_0\|.
\end{equation}
Now, since $\|Px_0\| \neq 0$, there exists a constant so that $\|Px_0\|\geq c\|x_0\|$, and so for all $t$ large enough, it holds 
\begin{equation}
\|\e^{T(\kappa)t} x\| \geq C\e^{t(p-\frac{3\zeta}{4})}\|x_0\|.
\end{equation}
We can further note (upon possibly reducing the size of $\eta$), that $\|x_0\|\geq \frac{1}{2}\|x\|$ for all $\|x-x_0\|\leq \eta_0$, and so the claim of the Lemma follows.
\end{proof}

\begin{proof}[Proof of Lemma \ref{lemma:kato-convergence}]
The first half of the proof is exactly as in \cite{CZSV25}. We recount it here since we require some notation.
Let $\Gamma$ be a curve around the eigenvalue $p$, and let $P, P(j)$ denote the Riesz projectors associated with the operators $T_0, T(j)=T_0+j T_1$, respectively, around $\Gamma$. Denote the resolvents of $T_0$ and $T(j)$ by $R(\mu;T_0) = (T_0 - \mu)^{-1}$ and $R(\mu;T(j)) = (T(j) - \mu)^{-1}$, respectively. We begin by considering the operator 
\begin{equation}
V(j)=I-P+P(j)P.
\end{equation}
Note that $V(j)P=P(j)P$ and $V(j)(I-P)=I-P$. By Lemma A.3 from \cite{CZSV25}, there exists some constant $c>0$ such that $\|P(j)-P\| \leq c j$, for $j$ sufficiently. Consequently, for small $j,$ $V(j)$ is invertible, and its inverse can be written as 
\begin{equation}
V(j)^{-1}=I+(P-P(j))P+o(j)
\end{equation}
where  $o(j)$ denotes any operator with norm of order $o(j)$ as $j \to 0$. In particular, $V(j)$ is an isomorphism from $\Ran(P) \to \Ran(P(j))$. 

Next, consider the operator 
\begin{equation}
R_1(\mu;j):=V(j)^{-1}R(\mu;T(j))V(j)P, \qquad \text{for } \mu\in \Gamma.
\end{equation}
This operator annihilates $\Ran(I-P)$. Since $V(j)$ maps $\Ran(P)$ into $\Ran(P(j))$  and  $R(\mu;T(j))$ commutes with $P(j)$ by functional calculus, we conclude that 
$R_1(\mu;j)$ maps $\Ran(P(j))$ into itself. Furthermore, by the invertibility of $V(j)$, we have $R_1(\mu;j)=P R_1(\mu;j)$.
We  now expand $R_1(\mu;j)$ as
\begin{equation}
R_1(\mu;j)=(I+(P-P(j))P+o(j))(R(\mu;T_0)-j R(\mu;T_0) T_1 R(\mu;T_0)+o(j))(I-(P-P(j))P). 
\end{equation}
This simplifies to 
\begin{equation} \label{eq:R-1-mu-kappa}
R_1(\mu;j)=R(\mu;T_0)-j R(\mu;T_0)T_1R(\mu;T_0)-R(\mu;T_0)(P-P(j))P+(P-P(j))P R(\mu;T_0)+o(j). 
\end{equation}
Using the fact that $R(\mu;T_0) P = (p - \mu)^{-1} P$ 
due to the semisimplicity of  $p$ for $T_0$, we get 
\begin{equation}
P R(\mu;T_0)(P-P(j))P=  P (P-P(j))P R(\mu;T_0) P.
\end{equation}
Thus, using that  $R_1(\mu;j)=P R_1(\mu;j)P$ and again $R(\mu;T_0) P = (p - \mu)^{-1} P$, we deduce from  \eqref{eq:R-1-mu-kappa} that
\begin{align*}
R_1(\mu;j)=(p -\mu)^{-1}P-j (p-\mu)^{-2}P T_1P+o(j) \,.
\end{align*}
Multiplying by $\frac{-\mu}{2\pi i}$ and integrating around the contour $\Gamma$, we obtain the expression
\begin{equation}
\label{eq:perturbative expression}
V(j)^{-1}T(j)P(j) V(j)P=p P+j P T_1P +o(j)
\end{equation}
where we have used the fact that  
\begin{equation}
T(j)P (j)=-\frac{1}{2\pi i}\int_{\Gamma}R(\mu;T(j))\mu \dd\mu \,.
\end{equation}
Observe that the $p_i(j)'s$ are the eigenvalues of $T(j)P (j)$ in the $m$-dimensional space $\Ran(P (j))$. By similarity, they are also the eigenvalues of $V(j)^{-1}T(j)P (j)V(j)$ in the $m$-dimensional space $\Ran(P)$. Since $P $ acts as the identity on $\Ran(P)$, they are also equal to the eigenvalues of $V(j)^{-1}T(j)P (j)V(j)P $ on $\Ran(P)$. 
Finally, applying Theorem A.5 from \cite{CZSV25} and using the asymptotic expression \eqref{eq:perturbative expression}, the first claim follows immediately.
Now, note that $P_i(j)$ is the one-dimensional eigenprojection for the eigenvalue $\mu_i+o_{j \to 0}(1)$ of the operator 
\begin{equation}
\tilde{T}^{(1)}(j):=j^{-1}(T(j)-p)P(j):P(j)X \to P(j) X.
\end{equation}
By \eqref{eq:perturbative expression} we have that 
\begin{equation}
\label{eq:kato perturbation 2}
V(j)^{-1}\tilde{T}^{(1)}(j)P(j) V(j)P= P T_1P +o_{j \to 0}(1),
\end{equation}
since $V(j)^{-1} P(j)V(j)P=P$, as $V(j)$ maps $Px \mapsto P(j)x.$ Note also that $V(j)^{-1}\tilde{T}^{(1)}(j)P(j) V(j)P $ is an operator from $PX \mapsto PX$. Thus, since $PT_1 P:PX \to PX$ has $m$ distinct eigenvalues, for all $j$ small enough it follows from \eqref{eq:kato perturbation 2} that the same holds for $V(j)^{-1}\tilde{T}^{(1)}(j)P(j) V(j)P$. Choose its $m$ eigenvectors to be $\psi_{i}(j)$, $i=1, \dots m$. From \eqref{eq:kato perturbation 2} it follows that $\psi_i(j) \to \psi_i$, where $\psi_i$ is the eigenvector corresponding to an eigenvalue $\mu_i$ of $PT_1P$. Next, note that in fact $\phi_i(j)=V(j) \psi_i(j)$ is an eigenvector of $\tilde{T}^{(1)}(j)$ with eigenvalue $\mu_i(j)$. Indeed, from \eqref{eq:kato perturbation 2} we see that, since $\psi_i(j) \in PX$, it holds 
\begin{equation}
\tilde{T}^{(1)}(j)V(j) \psi_i(j)=\tilde{T}^{(1)}(j)V(j) P\psi_i(j) = \mu_i V(j)\psi_i(j).
\end{equation}
Furthermore, since $V(j) \to \Id$ as $j \to 0$, it follows that $\phi_i(j) \to \psi_i$ as $j \to 0$.
Since the $\phi_i(j)$ are linearly independent, they form a basis for the $m$-dimensional space $P(j)X$. Thus, given $x \in X$ it holds
\begin{equation}
P(j)x=\sum_{i=1}^m \alpha_i(j)\phi_i(j),
\end{equation}
for some uniquely determined coefficients $\alpha_i(j)$. From this it follows that $P_i^{(1)}(j) x=\alpha_i(j)\psi_i(j)$, $i=1, \dots m$. Furthermore, we claim that the $\alpha_i(j)$ converge as $j \to 0$. 
Indeed, consider the following map $M(j) :PX \to PX$, defined by $M(j)\phi_i =\psi_i(j)$. Since $\psi_i(j) \to \phi_i$, it follows that $M(j) \to \Id$ as $j \to 0$. Therefore, $M^{-1}(j) \to \Id$ as well, and it holds
\begin{equation}
M^{-1}(j)V(j)^{-1}P(j)x=M^{-1}(j)\sum_{i=1}^m \alpha_i(j)\psi_i(j)=\sum_{i=1}^m\alpha_i(j)\phi_i.
\end{equation}
The left-hand side converges to $Px$ as $j \to 0$, and so, since the $\phi_i$ form a basis of $PX$, it follows that the $\alpha_i(j)$ must converge to some limit $\alpha_i$. Therefore, the projectors $P_i^{(1)}(j)x=\alpha_i(j)\psi_i(j)$ converge pointwise to $\alpha_i \phi_i$. But note that, since 
\begin{equation}
Px=\sum_{i=1}^m \alpha_i \phi_i,
\end{equation}
it follows that $P^{(1)}_iPx=\alpha_i \phi_i$, and so the proof is completed.
\end{proof}

\addtocontents{toc}{\protect\setcounter{tocdepth}{0}}
\section*{Acknowledgments}
The authors are grateful to Michele Coti Zelati for stimulating and insightful discussions.
MS acknowledges support from the Chapman Fellowship at Imperial College London. The research of DV was funded by the Imperial College President’s PhD Scholarships.

\addtocontents{toc}{\protect\setcounter{tocdepth}{1}}

 \bibliographystyle{abbrv}
 \bibliography{biblio.bib}

\end{document}